\documentclass[sn-mathphys]{sn-jnl}% Math and Physical Sciences Reference Style
\usepackage{amssymb,amsmath,mathrsfs, amsthm}
\usepackage{hyperref}

\usepackage{cases}

\jyear{2021}%
\theoremstyle{thmstyleone}%
\numberwithin{equation}{section}
\newtheorem{theorem}{Theorem}[section]%  meant for continuous numbers
\newtheorem{question}{Question}[section]%

\newtheorem{proposition}[theorem]{Proposition}%

\theoremstyle{thmstyletwo}%
\newtheorem{remark}[theorem]{Remark}%

\theoremstyle{thmstylethree}%
\newtheorem{definition}[theorem]{Definition}%

\begin{document}

\title[On the definition and smooth examples of weakly K\"ahler--Finsler metrics]{On the definition and smooth examples of weakly K\"ahler--Finsler metrics}

%%=============================================================%%
%% Prefix	-> \pfx{Dr}
%% GivenName	-> \fnm{Joergen W.}
%% Particle	-> \spfx{van der} -> surname prefix
%% FamilyName	-> \sur{Ploeg}
%% Suffix	-> \sfx{IV}
%% NatureName	-> \tanm{Poet Laureate} -> Title after name
%% Degrees	-> \dgr{MSc, PhD}
%% \author*[1,2]{\pfx{Dr} \fnm{Joergen W.} \spfx{van der} \sur{Ploeg} \sfx{IV} \tanm{Poet Laureate}
%%                 \dgr{MSc, PhD}}\email{iauthor@gmail.com}
%%=============================================================%%
%\author[]{\fnm{Xiaoshu} \sur{Ge}}\email{gxs@amss.ac.cn}
%\author[]{\fnm{Xiaoshu} \sur{Ge}\quad email: gxs@amss.ac.cn}
\author[]{\fnm{Xiaoshu} \sur{Ge}}
\affil{\orgdiv{Institute of Mathematics, Academy of Mathematics and Systems Science}, \orgname{Chinese Academy of Sciences}, \city{Beijing}, \postcode{100190}, \country{China}\\
E-mail: gxs@amss.ac.cn}

%\author[]{\fnm{} \sur{}$^{1,}$\footnote{Corresponding author}}\email{}

%\author[1,2]{\fnm{Third} \sur{Author}}\email{iiiauthor@gmail.com}
%\equalcont{These authors contributed equally to this work.}

%\affil{\orgdiv{Institute of Mathematics, Academy of Mathematics and Systems Science}, \orgname{Chinese Academy of Sciences},  \city{Beijing }, \postcode{100190}, \country{China}}

%\affil[2]{\orgdiv{Institute of Mathematics, Academy of Mathematics and Systems Science}, \orgname{Chinese Academy of Sciences}, \orgaddress{ \city{Beijing}, \postcode{100190}, \country{China}}}

%%==================================%%
%% sample for unstructured abstract %%
%%==================================%%

\abstract{
The generalization of the K\"ahler metric in the complex Finsler setting admits four natural versions. Among these, the strongly K\"ahler--Finsler condition is known to coincide with one of the intermediate conditions. In this paper, we establish the equivalence between the weakly K\"ahler--Finsler condition and the remaining intermediate condition. Moreover, we obtain new necessary and sufficient conditions for a weakly K\"ahler--Finsler metric to be strongly K\"ahler--Finsler. Using these criteria, we construct, for the first time, smooth examples of weakly K\"ahler--Finsler metrics which are not K\"ahler--Finsler, via the Kobayashi metrics on smooth bounded strongly convex domains.
}

%%================================%%
%% Sample for structured abstract %%
%%================================%%

\keywords{ Weakly K\"ahler--Finsler metric; K\"ahler--Finsler metric; Kobayashi metric; strongly convex domain; complex Finsler geometry. }

%%\pacs[JEL Classification]{D8, H51}

\pacs[MSC Classification]{53C60, 53C56, 32H02 }

\maketitle

\section{Introduction and main results}

Complex Finsler metric is a natural generalization of Hermitian metric, which is defined as follows.
\begin{definition}[\cite{AP}]
Let $M$ be a complex manifold, a strongly pseudoconvex complex Finsler metric on $M$ is an upper semi-continuous function $F:T^{1,0}M\rightarrow [0,+\infty)$ such that

 (1) $G:=F^2$ is smooth on 	
 $\widetilde{M}:=T^{1,0}M\setminus\{\mbox{zero section}\}$;

 (2) $F(z;\lambda v)=\vert\lambda\vert F(z;v)$ for any
 $\lambda\in\mathbb{C}$ and $(z;v)\in T^{1,0}M$.
	
(3) the matrix $\left(G_{i\overline{j}}\right):=\left(\frac{\partial^2G}{\partial
v^i\partial\overline{v^j}}\right)$ is positive definite at every $(z;v)\in \widetilde{M}$.
\end{definition}

\begin{remark}
By the homogeneous of a strongly pseudoconvex complex Finsler metric $F$, the Euler homogeneous identity provides
\[G(z;v)=G_{i\bar j}(z;v)v^i\bar v^j,\quad \forall (a,v)\in \tilde M.\]
And $G$ is smooth on the entire bundle $T^{1,0}M$ if and only if $G$ is a smooth Hermitian quadratic metric (cf. \cite{AP}).
\end{remark}

In Hermitian geometry, the metrics with torsion-free Chern connection are well known as K\"ahler metrics.
For every strongly pseudoconvex complex Finsler metric $F$, there is a uniquely induced Chern--Finsler connection over the tangent bundle. According to the vanishing degree of the horizontal part
torsion, there are three known kinds of metric notions as generalizations of K\"ahler in the Finsler setting.
\begin{definition}[ \cite{AP}]
A strongly pseudoconvex complex Finsler metric $F$ is called
\begin{itemize}
\item strongly K\"ahler--Finsler if
\begin{equation}
\Gamma_{j;i}^{k}(z;v) = \Gamma_{i;j}^{k}(z;v), \quad
\forall   \, (z;v) \in \widetilde{M}, \,i,j,k = 1,\dots,n;
\label{s-Kahler}
\end{equation}
\item K\"ahler--Finsler if
\begin{equation}
(\Gamma_{j;i}^{k}(z;v) - \Gamma_{i;j}^{k}(z;v))v^j=0, \quad
\forall   \, (z;v) \in \widetilde{M}, \,i,k = 1,\dots,n;
\label{Kahler}
\end{equation}
\item weakly K\"ahler--Finsler if
\begin{equation}
G_k(z;v)(\Gamma_{j;i}^{k}(z;v)-\Gamma_{i;j}^{k}(z;v))v^j, \quad
\forall   \, (z;v) \in \widetilde{M}, \,i = 1,\dots,n,
\label{w-Kahler}
\end{equation}
where $\Gamma^k_{i;j}$ is the horizontal Chern-Finsler connection coefficient.
\end{itemize}
\end{definition}

\begin{remark}
When $F$ is a Hermitian, $\Gamma_{i;j}^{l}(z;v) = \Gamma_{i;j}^{l}(z)$ is just the coefficient of the Chern connection of $G$, and the above conditions all reduce precisely to the K\"ahler condition.
\end{remark}

Notice that these conditions are general setting in complex Finsler geometry, it is very important to understand the concise relations among the three K\"ahler type conditions.
By definition, strongly K\"ahler--Finsler implies K\"ahler--Finsler, and K\"ahler--Finsler implies weakly K\"ahler--Finsler. Surprisingly, in \cite{CS}, B. Chen and Y. Shen proved the coincidence between strongly K\"ahler--Finsler and K\"ahler--Finsler.
\begin{theorem}[\cite{CS}]\label{CHSH}
The K\"ahler--Finsler metrics are actually strongly K\"ahler.
\end{theorem}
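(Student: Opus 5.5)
The plan is to rewrite the K\"ahler--Finsler condition \eqref{Kahler} as an identity between derivatives of $G$, differentiate it once in the fibre direction, and then compare symmetry types in the indices. I use the standard expression for the horizontal Chern--Finsler coefficients,
\[
\Gamma^k_{i;j}=G^{k\bar l}\,\partial_j G_{i\bar l},\qquad \partial_j=\frac{\partial}{\partial z^j},
\]
where $(G^{k\bar l})$ is the inverse of $(G_{i\bar l})$. I also use the notation $G_{\bar l}=\partial G/\partial\bar v^l$, $G_{i\bar l p}=\partial G_{i\bar l}/\partial v^p$, and so on.

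\textbf{Step 1: rewrite the hypothesis.} By homogeneity, $G_{\bar l}$ is homogeneous of degree $1$ in $v$, so Euler's identity gives $G_{j\bar l}v^j=G_{\bar l}$. Since $\partial_i$ commutes with contraction against $v^j$, this yields
\[
\Gamma^k_{j;i}v^j=G^{k\bar l}\,\partial_i G_{\bar l}.
\]
Substituting into \eqref{Kahler} and multiplying by $G_{k\bar m}$ (which is invertible by strong pseudoconvexity), the K\"ahler--Finsler condition becomes
\[
\partial_i G_{\bar m}=v^j\,\partial_j G_{i\bar m}\quad\text{on }\widetilde M,\ \text{for all } i,m.
\]

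\textbf{Step 2: differentiate in $v^p$.} The identity holds on the open set $\widetilde M$, where everything is smooth, so I may apply $\partial/\partial v^p$ to both sides. Using that $z$- and $v$-derivatives commute, this gives
\[
\partial_i G_{p\bar m}=\partial_p G_{i\bar m}+v^j\,\partial_j G_{i p\bar m}.
\]
Set $A_{ip\bar m}:=\partial_i G_{p\bar m}-\partial_p G_{i\bar m}$. This tensor is antisymmetric in $(i,p)$. On the other hand, the identity says $A_{ip\bar m}=v^j\partial_j G_{ip\bar m}$. The right-hand side is symmetric in $(i,p)$, because $G_{ip\bar m}$ is a third derivative of the smooth function $G$ in the variables $v^i,v^p,\bar v^m$, and these derivatives commute.

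\textbf{Step 3: conclude.} A tensor that is both symmetric and antisymmetric in $(i,p)$ vanishes, so $A_{ip\bar m}=0$, that is, $\partial_i G_{p\bar m}=\partial_p G_{i\bar m}$. Contracting with $G^{k\bar m}$ gives $\Gamma^k_{p;i}=\Gamma^k_{i;p}$, which is exactly \eqref{s-Kahler}.

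The only delicate point is Step 1. It requires the correct formula for $\Gamma^k_{i;j}$ in terms of $G$, and the correct use of homogeneity, so that the $v$-contracted coefficient $\Gamma^k_{j;i}v^j$ turns into $G^{k\bar l}\partial_iG_{\bar l}$. Once the hypothesis has the form $\partial_i G_{\bar m}=v^j\partial_j G_{i\bar m}$, the symmetric-versus-antisymmetric comparison in Steps 2 and 3 is immediate.
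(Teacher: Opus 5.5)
\textbf{The gap is in Step 1.} Your Step 1 rests on the formula $\Gamma^k_{i;j}=G^{k\bar l}\partial_jG_{i\bar l}$, and this is not the horizontal Chern--Finsler coefficient. Start from $\Gamma^k_{i;j}=\dot\partial_i\Gamma^k_{;j}$ with $\Gamma^k_{;j}=G^{k\bar l}\partial_jG_{\bar l}$. The $v^i$-derivative of the inverse matrix also contributes, and one gets
\[
\Gamma^k_{i;j}=G^{k\bar l}\,\delta_jG_{i\bar l}=G^{k\bar l}\bigl(\partial_jG_{i\bar l}-\Gamma^s_{;j}G_{is\bar l}\bigr).
\]
Your expression is the $dz^j$-coefficient of $\omega^k_i$ read off against $dv$, instead of against $\psi^s=dv^s+\Gamma^s_{;j}dz^j$.

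The correction term drops out of $\Gamma^k_{j;i}v^j$ because $G_{js\bar l}v^j=0$, but it survives in $\Gamma^k_{i;j}v^j$. With $\mathbb G^s=\Gamma^s_{;j}v^j$, the K\"ahler--Finsler condition actually reads
\[
\partial_iG_{\bar m}=v^j\partial_jG_{i\bar m}-\mathbb G^sG_{is\bar m}.
\]
Likewise, strong K\"ahler is $\delta_iG_{p\bar m}=\delta_pG_{i\bar m}$, not $\partial_iG_{p\bar m}=\partial_pG_{i\bar m}$. So your Steps 2--3 correctly prove an implication between two conditions that are not the ones in the theorem.

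The discrepancy is real. Let $g(v)$ be a $z$-independent, non-Hermitian strongly pseudoconvex Finsler metric on $\mathbb C^2$. Then $G(z;v)=g\bigl(v^1,v^2+z^1v^1\bigr)$ is the pull-back of $g$ under $(z^1,z^2)\mapsto\bigl(z^1,z^2+\tfrac12(z^1)^2\bigr)$, hence strongly K\"ahler. Yet at $z=0$ a direct computation gives $v^j\partial_jG_{1\bar m}-\partial_1G_{\bar m}=(v^1)^2g_{12\bar m}(v)$, which vanishes identically only if $g$ is Hermitian. So your reformulated hypothesis fails for a genuine K\"ahler--Finsler metric.

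\textbf{How to repair it.} Your symmetric-versus-antisymmetric idea is sound once the nonlinear connection is kept. Redoing Step 2 with $\delta_j$ produces the extra terms $-(\dot\partial_p\mathbb G^s)G_{is\bar m}-\mathbb G^sG_{isp\bar m}$. You then have to use the hypothesis a second time, in the form $\dot\partial_p\mathbb G^s=\Gamma^s_{p;j}v^j+\Gamma^s_{;p}=2\Gamma^s_{;p}$, to arrive at
\[
\delta_iG_{p\bar m}-\delta_pG_{i\bar m}=v^j\partial_jG_{ip\bar m}-\mathbb G^sG_{ips\bar m}-\Gamma^s_{;i}G_{ps\bar m}-\Gamma^s_{;p}G_{is\bar m}.
\]
The left side is antisymmetric and the right side symmetric in $(i,p)$, so both vanish.

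Simpler still is to avoid writing $\Gamma$ in terms of $G$ at all. Since $\Gamma^k_{j;i}v^j=\Gamma^k_{;i}$ and $\Gamma^k_{i;j}v^j=v^j\dot\partial_i\Gamma^k_{;j}=\dot\partial_i\mathbb G^k-\Gamma^k_{;i}$, condition \eqref{Kahler} is equivalent to $2\Gamma^k_{;i}=\dot\partial_i\mathbb G^k$. Then $2\Gamma^k_{p;i}=\dot\partial_p\dot\partial_i\mathbb G^k$ is symmetric in $(p,i)$, which is \eqref{s-Kahler}.

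Either version is more direct than what the paper offers. The paper only cites \cite{CS} and suggests adapting the commuting fields $D_j$ and the cyclic identity from the proof of Theorem~\ref{thm}.
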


In \cite{CGZhou}, N. Cui, J. Guo and L. Zhou constructed some weakly K\"ahler--Finsler examples with singularity, which are not K\"ahler--Finsler.
So the notions of weakly K\"ahler--Finsler and K\"ahler--Finsler should be different in general.

Notice that in \eqref{w-Kahler}, the weakly K\"ahler-Finsler condition, involve the contraction of $\Gamma_{j;i}^{k}(z;v)-\Gamma_{i;j}^{k}(z;v)$ with $G_k$ and $v^j$. The K\"ahler-Finsler condition is obtained by remove the term $G_k$, if we replace this by removing the other term, then another nature condition between strongly K\"ahler--Finsler and weakly K\"ahler--Finsler appears:
\begin{equation}
G_k(z;v)(\Gamma_{j;i}^{k}(z;v)-\Gamma_{i;j}^{k}(z;v))=0, \quad
\forall  \,(z;v) \in \widetilde{M}, \,i,j = 1,\dots,n.
\label{n-KF}
\end{equation}
This new condition is also considered at the first time, as known as the author's knowledge, when the author and Xiangyu Zhou were considering the question wether the weakly K\"ahler--Finsler Kobayashi metrics of smooth bounded strongly convex domains satisfy  the new condition \eqref{n-KF}. The case of complex dimension two was obtained in their unpublished manuscript, by observing that $G_k(\Gamma_{j;i}^{k}-\Gamma_{i;j}^{k})v^j=0$ implies
\[G_k(\Gamma_{1;2}^{k}-\Gamma_{2;1}^{k})v^1
=G_k(\Gamma_{1;2}^{k}-\Gamma_{2;1}^{k})v^2=0,\]
thus $G_k(\Gamma_{1;2}^{k}-\Gamma_{2;1}^{k})=0$ by $v\neq0$.

The author presented related results at the Young Mathematicians Workshop on Several Complex Variables, held in Incheon, Korea, in August 2026. And they proposed the following nature questions:
\begin{question}\label{que1}
(1) Does the Kobayyashi metrics of smooth bounded strongly convex domains satisfy  the new condition  \eqref{n-KF}?

(2) For complex dimension $n>2$, wether the new condition  \eqref{n-KF} coincides with the weakly K\"ahler--Finsler condition ?

(3) Wether the new condition  \eqref{n-KF} coincides with the K\"ahler--Finsler condition?
\end{question}

The following Theorem \ref{1thm} gives a positive answer to (2) of Question \ref{que1}, thus the answer to the special case, question (1), is also true.
\begin{theorem}[cf. Theorem \ref{thm}]\label{1thm}
The weakly K\"ahler--Finsler condition coincides with
\begin{equation*}
G_k(z;v)(\Gamma_{j;i}^{k}(z;v)-\Gamma_{i;j}^{k}(z;v))=0, \quad
\forall   \, (z;v) \in \widetilde{M}, \,i,j= 1,\dots,n,
\end{equation*}
which coincides with
 \begin{eqnarray*}
\delta_cG_{b}-\delta_bG_{c}=0,\quad\forall  \, (z;v) \in \widetilde{M}, \,b,c = 1,\dots,n,
 \end{eqnarray*}
\end{theorem}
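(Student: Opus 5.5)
The plan is to first rewrite the contraction $G_k\Gamma^k_{i;j}$ intrinsically, which gives the second equivalence almost for free. Then I will prove the nontrivial implication, weakly K\"ahler--Finsler $\Rightarrow$ \eqref{n-KF}, by differentiating the weakly K\"ahler condition in the fibre direction.

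Throughout I use the Abate--Patrizio conventions. The nonlinear connection is $N^k_j=G^{k\bar l}\partial_j G_{\bar l}$, the horizontal fields are $\delta_j=\partial_j-N^m_j\dot\partial_m$ with $\dot\partial_m=\partial/\partial v^m$, and
\[\Gamma^k_{i;j}=G^{k\bar l}\delta_j(G_{i\bar l})=\dot\partial_i N^k_j.\]

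\textbf{Step 1 (the second equivalence).} By homogeneity $G_k=G_{k\bar l}\bar v^l$, hence $G_kG^{k\bar l}=\bar v^l$. Since $\delta_j$ contains no $\bar v$-derivatives, $\delta_j\bar v^l=0$. Therefore
\[G_k\Gamma^k_{i;j}=\bar v^l\,\delta_j G_{i\bar l}=\delta_j(G_{i\bar l}\bar v^l)=\delta_jG_i .\]
So \eqref{n-KF} is literally $\delta_jG_i-\delta_iG_j=0$, which is the last condition in Theorem \ref{1thm}.

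Two further facts will be used.
\begin{itemize}
\item $\delta_iG=\partial_iG-\bar v^l\partial_iG_{\bar l}=0$ by homogeneity.
\item $G_kN^k_j=\partial_jG$.
\end{itemize}
Set $A_{ij}:=\delta_jG_i-\delta_iG_j$. This is antisymmetric and of bidegree $(0,0)$ in $(v,\bar v)$, since $G_i$ has bidegree $(0,1)$ and $N^m_j$ has bidegree $(1,0)$. In particular Euler's identity gives $v^k\dot\partial_kA_{ij}=0$. The weakly K\"ahler--Finsler condition \eqref{w-Kahler} reads $A_{ij}v^j=0$.

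\textbf{Step 2 (differentiate the weak condition).} Applying $\dot\partial_k$ to $A_{ij}v^j=0$ gives
\[(\dot\partial_kA_{ij})v^j=-A_{ik}.\]
The goal is to close this system with a cyclic (Bianchi-type) identity
\[\dot\partial_kA_{ij}+\dot\partial_iA_{jk}+\dot\partial_jA_{ki}=0 .\]
Granting it, contract with $v^j$:
\begin{itemize}
\item the last term vanishes by degree $0$;
\item the first term equals $-A_{ik}$;
\item the second equals $-(\dot\partial_iA_{kj})v^j=A_{ki}$.
\end{itemize}
Hence $-A_{ik}+A_{ki}=-2A_{ik}=0$, i.e.\ \eqref{n-KF} holds. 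The converse implications, \eqref{n-KF} $\Rightarrow$ weakly K\"ahler--Finsler, are trivial by contracting with $v^j$.

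\textbf{Main obstacle: the cyclic identity.} I would compute using
\[\dot\partial_k(\delta_jG_i)=\delta_jG_{ik}-G_{im}\Gamma^m_{k;j}.\]
In the cyclic sum, the terms $\delta_jG_{ik}$ cancel because $G_{ik}$ is symmetric. What remains is a cyclic sum of terms of the form $G_{im}(\Gamma^m_{j;k}-\Gamma^m_{k;j})$, i.e.\ horizontal torsion lowered by the vertical symmetric tensor $G_{im}$. These do not obviously cancel.

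My first attempt is to avoid $\Gamma$ altogether. Write
\[A_{ij}=\partial_jG_i-\partial_iG_j-G_{ik}N^k_j+G_{jk}N^k_i,\]
where the $\partial_j\dot\partial_i\dot\partial_k G$ parts are manifestly cyclically symmetric. Then exploit $G_kN^k_j=\partial_jG$ and its $\dot\partial$-derivatives, namely $G_{ik}N^k_j+G_k\Gamma^k_{i;j}=\partial_jG_i$, to rewrite the $N$-terms.

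If the cyclic identity genuinely fails, the fallback is different. I would instead exploit holomorphy-type information: apply $\dot\partial_{\bar l}$ to $A_{ij}v^j=0$ and use positivity of $(G_{i\bar j})$ to solve for $A_{ij}$. This mimics the dimension-two argument in the introduction, where antisymmetry plus $v\ne0$ already forces vanishing.
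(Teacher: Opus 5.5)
\textbf{There is a genuine gap.} Your Step 1 is fine, and the skeleton of Step 2 is exactly the paper's: differentiate $A_{ij}v^j=0$, contract a cyclic identity with $v^j$, and kill one term by Euler. The problem is that the cyclic identity you propose to grant is false for $n\ge 3$.

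\textbf{Why the $\dot\partial$-identity fails.} Write $A_{ij}=G_m\tau^m_{ij}$ with $\tau^m_{ij}:=\Gamma^m_{i;j}-\Gamma^m_{j;i}=\dot\partial_iN^m_j-\dot\partial_jN^m_i$. The terms $G_m\dot\partial_k\tau^m_{ij}$ cancel cyclically, leaving
\[
\dot\partial_kA_{ij}+\dot\partial_iA_{jk}+\dot\partial_jA_{ki}=G_{mk}\tau^m_{ij}+G_{mi}\tau^m_{jk}+G_{mj}\tau^m_{ki}.
\]
The right-hand side is totally antisymmetric in $(i,j,k)$. It therefore vanishes automatically when $n=2$, but not in general. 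For a concrete counterexample, work near $0\in\mathbb C^3$ and set
\[
G=g(v)+2\operatorname{Re}\big(z^3H(v)\big),\quad H=v^1\overline{b_cv^c},\quad g=\sqrt{\|v\|^4+t|v^2|^4},
\]
with $t>0$ and $|z|$ small. At $z=0$ only $N^m_3=g^{\bar cm}v^1\bar b_c$ is nonzero. At $v_0=(0,1,1)$ the right-hand side for $(k,i,j)=(3,1,2)$ equals $-g_{2a}g^{\bar ca}\bar b_c$. Since $g_{22}(v_0)=t(4+t)^{-3/2}\neq0$, this is nonzero for suitable $b$.

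\textbf{Your two repairs do not work.} In the first attempt, the relation $G_{ik}N^k_j+G_k\Gamma^k_{i;j}=\partial_jG_i$ is just the definition of $\delta_jG_i$ restated, so it cannot remove an obstruction that holds at the level of identities. The fallback yields $(\dot\partial_{\bar l}A_{ij})v^j=0$, which contains no undifferentiated $A_{ij}$ to solve for. The two-dimensional trick is pure linear algebra and does not extend: for $n\ge3$ a nonzero antisymmetric matrix can annihilate $v$.

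A minor slip: $A_{ij}$ has bidegree $(0,1)$, not $(0,0)$. This is harmless, since only $v^k\dot\partial_kA_{ij}=0$ is used.

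\textbf{The missing idea} is to replace $\dot\partial_k$ by the paper's adapted operator
\[
D_k:=\dot\partial_k-G_{ka}G^{\bar ba}\dot\partial_{\bar b}.
\]
It has the following properties.
\begin{itemize}
\item $D_kv^j=\delta^j_k$ and $D_kG_a=0$. Hence $[D_i,D_j]$ annihilates $v^k$ and $G_a$. Their vertical differentials form a coframe because $(G_{k\bar l})$ is invertible, so $[D_i,D_j]=0$.
\item $\delta_kG_j=D_jG_{;k}$. So $A_{ij}=D_iG_{;j}-D_jG_{;i}$ is $D$-exact, and $D_kA_{ij}+D_iA_{jk}+D_jA_{ki}=0$ holds identically.
\item $v^lD_l=v^l\dot\partial_l$, because $v^lG_{la}=0$.
\end{itemize}
With $D_k$ in place of $\dot\partial_k$, your Step 2 goes through verbatim and gives $-2A_{ik}=0$.

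Your fallback identity does have a role here. Under the hypothesis $A_{ij}v^j=0$, it shows $(D_kA_{ij})v^j=(\dot\partial_kA_{ij})v^j$. Together with $v^jG_{ja}=0$, it also shows that the $v^j$-contraction of your $\dot\partial$-cyclic sum equals that of the $D$-cyclic sum. So the contracted identity you actually need is true, but it has to be obtained through the $D$-frame, not from any unconditional $\dot\partial$-identity.
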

 \begin{remark}
\begin{itemize}
  \item The new condition \eqref{n-KF} appears very naturally as an intermediate condition between strongly K\"ahler--Finsler and weakly K\"ahler--Finsler;
  \item known results under K\"ahler-Finsler setting may be generalized to the weakly K\"ahler-Finsler setting, and conclusions under weakly K\"ahler-Finsler setting may be stronger,  with the help of the new equivalent condition.
 \item This theorem, for complex dimension \(n>2\), was obtained using the Danus automated reasoning system.
     \end{itemize}
\end{remark}

Here we give some new sufficient and necessary conditions, for weakly K\"ahler--Finsler metrics to be K\"ahler--Finsler by using $\Gamma^i_{\bar j;k},$ the mixed (1,1)-torsion of the Chern--Finsler connection of $F$.
\begin{theorem}[cf Theorem \ref{tor}]
A weakly K\"ahler--Finsler metric is K\"ahler--Finsler if and only if one of the following holds:
 \begin{equation}
G_i(z;v)\dot\partial_j\Gamma^i_{\bar l;k}(z;v)v^k=0,
 \quad \forall\, (z;v) \in \widetilde{M}, \,j,l = 1,\dots,n;\label{4}
\end{equation}
   \begin{equation}
G_{ij}(z;v)\Gamma_{\bar l;k}^{i}(z;v)v^k=0, \quad
\forall  \, (z;v) \in \widetilde{M}, \,j,l = 1,\dots,n;\label{w+new}
\end{equation}
 \begin{equation}
G_i(z;v)\dot\partial_j\Gamma^i_{\bar l;k}(z;v)v^k
=G_{ij}(z;v)\Gamma_{\bar l;k}^{i}(z;v)v^k, \quad
\forall  \, (z;v) \in \widetilde{M}, \,j,l = 1,\dots,n.
\end{equation}
\end{theorem}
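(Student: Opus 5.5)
The plan is to reduce everything to one algebraic identity. That identity expresses the Kähler--Finsler torsion vector contracted with $G_{k\bar l}$ in terms of $G_k\dot\partial_i\Gamma^k_{\bar l;j}v^j$. I use the Abate--Patrizio conventions $\Gamma^i_{;k}=G^{i\bar s}\partial_kG_{\bar s}$, $\Gamma^i_{j;k}=\dot\partial_j\Gamma^i_{;k}$ and $\Gamma^i_{\bar l;k}=\dot\partial_{\bar l}\Gamma^i_{;k}$, where $\dot\partial$ denotes differentiation in the fibre variable $v$. Since $\Gamma^i_{;k}(z;\lambda v)=\lambda\Gamma^i_{;k}(z;v)$, Euler's identity gives $\Gamma^i_{j;k}v^j=\Gamma^i_{;k}$ and $\Gamma^i_{\bar l;k}\bar v^l=0$.

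\emph{Step 1 (rewrite the torsion).} Put $\xi^k:=\Gamma^k_{;j}v^j$ and $T^k_i:=(\Gamma^k_{j;i}-\Gamma^k_{i;j})v^j$. Differentiating $\xi^k$ gives $\dot\partial_i\xi^k=\Gamma^k_{i;j}v^j+\Gamma^k_{;i}$. Together with $\Gamma^k_{j;i}v^j=\Gamma^k_{;i}$ this yields
\[
T^k_i=2\Gamma^k_{;i}-\dot\partial_i\xi^k .
\]
By definition, $F$ is Kähler--Finsler iff $T\equiv0$, and weakly Kähler--Finsler iff $G_kT^k_i\equiv0$.

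\emph{Step 2 (key vanishing).} I claim that $G_k\Gamma^k_{\bar l;i}=0$ identically, for every strongly pseudoconvex metric. To see this, apply $\dot\partial_{\bar l}$ to $G_{k\bar s}\Gamma^k_{;i}=\partial_iG_{\bar s}$ and contract with $\bar v^s$. Homogeneity gives $G_{k\bar s\bar l}\bar v^s=0$, $G_{k\bar s}\bar v^s=G_k$ and $G_{\bar s\bar l}\bar v^s=0$. Hence $G_k\Gamma^k_{\bar l;i}=\partial_i(G_{\bar s\bar l})\bar v^s=0$.

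Applying $\dot\partial_j$ to this identity and contracting with $v^i$ gives
\[
G_{kj}\Gamma^k_{\bar l;i}v^i+G_k\dot\partial_j\Gamma^k_{\bar l;i}v^i=0 .
\]
Hence \eqref{4} holds iff \eqref{w+new} holds. The third condition then forces both expressions to equal their own negatives, so it is equivalent to each of them. Thus all three conditions are equivalent for an arbitrary metric, and it suffices to show that, under the weakly Kähler--Finsler hypothesis, \eqref{4} is equivalent to $T\equiv0$.

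\emph{Step 3 (main identity).} Differentiate the weak condition $G_kT^k_i=0$ in $\bar v^l$ to get $G_{k\bar l}T^k_i=-G_k\dot\partial_{\bar l}T^k_i$. By Step 1,
\[
\dot\partial_{\bar l}T^k_i=2\Gamma^k_{\bar l;i}-\dot\partial_i\big(\Gamma^k_{\bar l;j}v^j\big).
\]
Expand the last term as $\dot\partial_i\Gamma^k_{\bar l;j}v^j+\Gamma^k_{\bar l;i}$ and use Step 2 to kill every term of the form $G_k\Gamma^k_{\bar l;i}$. This gives
\[
G_{k\bar l}T^k_i=G_k\dot\partial_i\Gamma^k_{\bar l;j}v^j .
\]
Since $(G_{k\bar l})$ is positive definite, $T\equiv0$ iff the right side vanishes, which is exactly \eqref{4}. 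The forward direction is simpler: if $F$ is Kähler--Finsler, then $T\equiv0$, so the identity, which only used $G_kT^k_i\equiv0$, gives \eqref{4} directly. One could alternatively quote Theorem \ref{CHSH}, but this is not needed.

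The step I expect to need the most care is Step 2 together with the bookkeeping of homogeneity degrees. In particular, I must confirm that $\dot\partial_{\bar l}$ commutes appropriately with the horizontal $\partial_i$, and that $G_{\bar s\bar l}\bar v^s=0$ holds with the paper's sign and index conventions for $\Gamma^i_{\bar l;k}$. If the paper instead defines $\Gamma^i_{\bar l;k}$ through $\delta_k$-derivatives, I would first check that it agrees with $\dot\partial_{\bar l}\Gamma^i_{;k}$ before running the computation above.
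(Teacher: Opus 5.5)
Your argument is correct, but its main step takes a different route from the paper's.

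\textbf{The paper's route.} It first proves Theorem \ref{tor-eq}. It invokes Theorem \ref{thm}, and hence Proposition \ref{prop} with the commuting operators $D_i$, to upgrade the weak condition to $(\delta_cG_{b\bar a}-\delta_bG_{c\bar a})\bar v^a=0$ for all $b,c$. It differentiates this in $\bar v^k$ to get the uncontracted identity \eqref{tor3}, then contracts with $v^b$ to obtain criterion \eqref{w+new}. Finally the identity \eqref{iff2}, which is your Step 2, transfers this to the other two conditions.

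\textbf{Your route.} You differentiate the weak condition $G_kT^k_i=0$ itself in $\bar v^l$. Using $T^k_i=2\Gamma^k_{;i}-\dot\partial_i\xi^k$ and $G_k\Gamma^k_{\bar l;i}=0$, you get
\[
G_{k\bar l}T^k_i=G_k\dot\partial_i\Gamma^k_{\bar l;j}v^j
\]
from the weak hypothesis alone. This settles criterion \eqref{4} directly. In effect it is a self-contained proof of the Chen--Shen criterion that the paper only quotes in Remark \ref{CS-iff}. The two computations are consistent: by \eqref{iff2} your right-hand side equals $-G_{ki}\Gamma^k_{\bar l;j}v^j$, which is exactly what \eqref{tor4} gives.

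\textbf{Comparison.} Your route is more economical. It needs only Euler's homogeneity identities, and neither Theorem \ref{thm} nor the Chen--Shen result. The paper's route yields more information. Identity \eqref{tor3} expresses the full antisymmetric tensor $\delta_bG_{c\bar k}-\delta_cG_{b\bar k}$ through the mixed torsion, not merely its contraction with $v^b$, and it shows the new condition of Theorem \ref{thm} at work.

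The worries in your last paragraph are unfounded. $\partial_i$ and $\dot\partial_{\bar l}$ are coordinate derivatives on $\widetilde M$ and commute. The paper defines $\Gamma^\alpha_{\bar\beta;\mu}=\dot\partial_{\bar\beta}\Gamma^\alpha_{;\mu}$, exactly as you assumed.
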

\begin{remark}
\begin{itemize}
  \item The condition \eqref{4} was due to B. Chen and Y.Shen in \cite{CS}.
  \item Condition \eqref{w+new} can be writen as $G_{ij}\dot\partial_{\bar l}\mathbb G^i=0,$ where $\mathbb G^i(z;v):=\Gamma_{;k}^{i}(z;v)v^k$ is the complex geodesic spray.
\end{itemize}
\end{remark}

Due to L. Lempert's results in \cite{Lem81}, the Kobayashi metric $F_K$ and the Carath\'eodory metric $F_C$ of every smooth bounded strongly convex domain coincide with each other. They have constant holomorphic sectional curvature $-4$.
Patrizio \cite{Pat} find that $F_K$ is a strongly
convex metric.
And by Corollary 3.4 of M. Abate and G. Patrizio \cite{AP2}, $F_K$ is a weakly K\"ahler--Finsler metric.
Recently, Gaussier and Zimmer proved that, for a bounded strongly pseudoconvex domain with $C^2$-smooth boundary, if the Kobayashi metric $F_K$ is K\"ahler, the domain must be quotient of the unit ball \cite{Gau-Zim}; similarly, X. Dong, R. Wang and B. Wong proved that, if the Carath\'eodory metric $F_C$ is K\"ahler, the domain must be the unit ball \cite{DWW}. Thus in general, the Kobayashi and Carath\'eodory metrics are not K\"ahler. X. Ge and X. Zhou obtained that the conditions of  \cite{Gau-Zim,DWW} can be relaxed to K\"ahler-Berwald \cite{GZ}.
Thus there is an important and nature question:
\begin{question}\label{Kob-ques}
 Wether the Kobayashi metric of smooth bounded strongly convex domains, which do not holomorphically equivalent to the unit ball,  are K\"ahler-Finsler?
\end{question}
\begin{remark}
 Both positive and negative examples about this question will provide important metrics: the positive case will provide non-K\"ahler--Berwald,  K\"ahler--Finsler example, and the negative case  will provide non-K\"ahler--Finsler, smooth weakly K\"ahler--Finsler example. The difficulty is that, to date,
  besides domains biholomorphic to the unit balls where the Kobayashi metrics reduce to K\"ahler, the only known strictly convex (not strongly convex) domains for which the Kobayshi metrics have been explicitly calculated is the ellipsoid
$E_m:=\{z\in \mathbb C^2:\vert z_1\vert^2+\vert z_2\vert^4<1\}$ whose Kobayashi metric is proved to be $C^1$ by B.E. Blank, D. Fan, D. Klein, S.G. Krantz, D. Ma and
M.-Y. Pang in \cite{ellipsoid}.
 \end{remark}
Using the following complete bounded strongly convex Reinhardt domains with algebraic boundary
\[\Omega:=\{z\in \mathbb C^2: \vert z_1\vert^2+\vert z_2\vert^2+t\vert z_1\vert^4<1 \},\,t>0,\]
we give a negative answer to Question \ref{Kob-ques}, and provided, for the first time, smooth strongly convex weakly K\"ahler-Finsler metrics which are not K\"ahler-Finsler.
\begin{theorem}[cf. Theorem \ref{Kob-nKF}]\label{1Kob-nKF}
The Kobayashi metric of $\Omega$ is not K\"ahler-Finsler.
\end{theorem}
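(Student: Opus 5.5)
We will argue by contradiction, using the torsion criteria of Theorem~\ref{tor} together with Lempert's theory of complex geodesics. Suppose $F_K$ on $\Omega$ were K\"ahler--Finsler. By Theorem~\ref{CHSH} it would then be strongly K\"ahler, so $\Gamma^k_{j;i}=\Gamma^k_{i;j}$ identically on $\widetilde\Omega$. Since $F_K$ is weakly K\"ahler--Finsler with constant holomorphic sectional curvature $-4$, the criterion \eqref{w+new} of Theorem~\ref{tor} would give
\[G_{ij}\,\dot\partial_{\bar l}\mathbb G^i=0\]
everywhere.

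The plan is to exhibit a single point $(z;v)\in\widetilde\Omega$ where this quantity, or equivalently the quantity in \eqref{4}, is nonzero.

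The key tool is that the complex geodesics of $F_K$ are Lempert extremal discs, and the geodesic spray $\mathbb G^i$ is governed by them. The symmetries of $\Omega$ should keep the computation finite.

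\textbf{Choosing a computable point.} $\Omega$ is a complete Reinhardt domain, invariant under $(z_1,z_2)\mapsto(e^{i\theta_1}z_1,e^{i\theta_2}z_2)$. We therefore work at the origin $z=0$, or more generally at points on an axis. At the origin the indicatrix of $F_K$ is exactly $\Omega$: the Kobayashi indicatrix of a convex balanced domain at $0$ is the domain itself. Hence
\[F_K(0;v)=h(v),\]
where $h$ is the Minkowski functional of $\Omega$, and $G(0;v)=h(v)^2$ is explicit via the quartic equation $|v_1|^2+|v_2|^2+t|v_1|^4h^{-2}=h^2$. This gives $G_{ij}(0;v)$ in closed form.

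For $\mathbb G^i$ we also need first-order $z$-derivatives of $G$ at $0$. We would obtain these from the automorphism-free structure of extremal discs. The extremal discs through $0$ are the linear discs $\zeta\mapsto \zeta v/h(v)$. Varying the base point requires extremal discs through nearby points. For these we would use Lempert's characterization: $f$ is extremal if and only if there is a holomorphic $p$ with $\overline{\zeta\, p(\zeta)\,\nu(f(\zeta))}>0$ on $\partial\Delta$, where $\nu$ is the outer normal. We would then perturb the linear discs to first order in the base point.

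The cleaner alternative is to work on a slice. Restrict to the invariant subspace $\{z_2=0\}$, or to directions $v=(v_1,0)$ and $(0,v_2)$. There the extremal discs stay in Reinhardt-symmetric families and the first-order variation is explicit.

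\textbf{Expanding in $t$.} The main computation is the first-order term of $G$ in $z$, and possibly second-order terms, since $\dot\partial_{\bar l}\mathbb G^i$ involves $\partial_{z}\dot\partial_{\bar l}$ of $G$ together with inverse-metric terms. For this we plan to expand in the parameter $t$ around $t=0$, where $\Omega$ is the ball and everything is K\"ahler. We write
\[G=G_{\mathrm{ball}}+t\,G^{(1)}+O(t^2)\]
and compute $G^{(1)}$ by linearizing Lempert's extremal-disc equation about the ball's discs. The latter are explicit Möbius-linear discs, and the perturbation of the boundary defining function is $t|z_1|^4$.

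The obstruction \eqref{w+new} is zero for the ball. Its $t$-linear coefficient at a convenient point, say $z=(a,0)$ with $v$ a mixed direction $(v_1,v_2)$ with both components nonzero, should be a nonzero rational expression. This would show $F_K$ is not K\"ahler--Finsler for small $t>0$.

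To reach all $t>0$, we would note that the obstruction depends real-analytically on $t$, since the boundary is algebraic and Lempert discs depend analytically on the data. Nonvanishing for small $t$ then gives nonvanishing for all but a discrete set of $t$. For the remaining values one would fall back on a direct computation at the origin.

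\textbf{Main obstacle.} The hardest step is the linearized Lempert problem: solving for the first-order deformation of extremal discs through off-center points, in directions not tangent to the axes. The choice of direction matters because in axis directions the Reinhardt symmetry may force the obstruction to vanish. We would first try the Fourier/Hardy-space method, since the perturbation $t|z_1|^4$ restricted to the ball discs is a trigonometric polynomial on $\partial\Delta$. We would then check nonvanishing by isolating one explicit monomial coefficient in $v,\bar v$.
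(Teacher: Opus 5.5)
\textbf{There is a genuine gap.} The decisive step of your plan fails as stated. At $t=0$ the metric $G_0$ is Hermitian (the ball's Kobayashi metric). Hence $G_{0,ij}\equiv 0$, and $\mathbb G_0^i=\Gamma^i_{jk}(z)v^jv^k$ is holomorphic in $v$, so $\dot\partial_{\bar l}\mathbb G_0^i\equiv 0$. The obstruction $G_{ij}\dot\partial_{\bar l}\mathbb G^i$ in \eqref{w+new} is therefore a product of two factors that both vanish at $t=0$. It is $O(t^2)$ at every point, so its $t$-linear coefficient is identically zero, not ``a nonzero rational expression.'' By \eqref{iff2} the same holds for \eqref{4}. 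This is visible in the paper's final answer, whose coefficient carries the factor $(s-1)^2=(\sqrt{1+t}-1)^2\sim t^2/4$.

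Your route can be repaired. The $t^2$-coefficient is the product of the first-order corrections of $G_{ij}$ and of $\dot\partial_{\bar l}\mathbb G^i$, so it still needs only the linearized Lempert data. But you have to notice this, and you still have to carry out the computation; the proposal only asserts nonvanishing, which is the entire content of the theorem.

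\textbf{The passage to all $t>0$ is not sound either.} Lempert's theory (L3) gives smooth dependence, not real-analytic dependence on the parameter $t$. Analyticity would need a separate implicit-function argument in analytic function spaces. Even granting it, you only get nonvanishing at your fixed point for $t$ outside a discrete set. Your fallback for the exceptional values, ``a direct computation at the origin,'' cannot work. Since $z\mapsto -z$ is an automorphism of $\Omega$, $G(-z;v)=G(z;v)$, so $G_{;k}(0;v)=0$ and $\mathbb G^i(0;v)\equiv 0$. The obstruction therefore vanishes identically at $z=0$ for every $t$. For the same reason, the ``first-order $z$-derivatives of $G$ at $0$'' you plan to compute are zero; the first informative term is second order in $z$.

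\textbf{How the paper avoids both problems.} It never expands in $t$ and works with each fixed $t>0$:
\begin{enumerate}
\item It computes $G(0;v)$ exactly.
\item It uses the first-order variation of the Lempert discs $f_{\varepsilon w,v}$ to show $\lambda'=0$ and to solve for $\psi=w+\tfrac12\ddot\psi(0)\zeta^2$.
\item It uses the second-order variation to find $\lambda''$, hence the Hermitian second-order Taylor coefficient of $G(\cdot\,;v)$ at $0$.
\item It evaluates $G_{ij}\dot\partial_{\bar l}\mathbb G^i$ at $((z_1,0);v^\ast)$, with $v^\ast\in\partial\Omega$. The result is an explicit multiple of $\bar z_1$, nonzero for every $s=\sqrt{1+t}>1$.
\end{enumerate}

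\textbf{Minor points.} You misstate the dual-map criterion: $p>0$ is a real function on $\partial\mathbb D$, and it is $\zeta p(\zeta)\rho_{;j}(f(\zeta))$ that must extend holomorphically. Invoking Theorem \ref{CHSH} and the constant curvature is unnecessary, since Theorem \ref{tor} applies to any weakly K\"ahler--Finsler metric.
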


\section{Complex Finsler geometry}\label{Sec-cpx-F}
	
In this section, we briefly review some basic notions of complex Finsler geometry. 	
For a strongly pseudoconvex complex Finsler manifold $(M,F)$, denoting the inverse matrix of $(G_{i\bar{j}})$ by $(G^{\bar{j}l})$.
Denoted by $\mathcal V:=ker (d\pi)\subset T^{1,0}\widetilde M $ for the vertical subbundle over $\widetilde M$, where $\pi: \widetilde M \rightarrow M$ is the canonical projection.

Since $F$ is strongly pseudoconvex, thus $\left(G_{i\bar j}\right)$ is positive definite, hence induces a Hermitian metric on $\mathcal V:$
\begin{equation}
\langle V,W\rangle:=G_{a\bar b}(z;v)V^a\bar W^b,\quad\forall V,W\in \mathcal V_{(z;v)}.\label{vert-metric}
\end{equation}
We always use the Einstein convention in this paper.
And we adopt the convention of using a subscript to indicate differentiation of $G$ and a semicolon to distinguish derivatives with respect to the base coordinates from those with respect to the fiber coordinates. For instance, $G_{k\bar{j};i} := \dfrac{\partial^3 G}{\partial z^i \partial \bar{v}^j\partial v^k}$.
And we denote
\[\partial_i:=\frac{\partial}{\partial z^i},\,
 \dot\partial_i:=\frac{\partial}{\partial v^i},
 \partial_{\bar i}:=\frac{\partial}{\partial \bar z^i},\,
 \dot\partial_{\bar i}:=\frac{\partial}{\partial \bar v^i}.\]
The Chern--Finsler connection of $F$ is defined as follows.

\begin{definition}[\cite{AP}]\label{Abate-Chern}
Let $F$ be a strongly pseudoconvex complex Finsler metric of $M$, and $\langle \cdot,\cdot\rangle$ is defined as \eqref{vert-metric}. Then Chern--Finsler connection is the unique complex vertical connection
$\nabla:\Gamma(\mathcal V)\rightarrow\Gamma(T^\ast_{\mathbb C}\widetilde M\otimes \mathcal V ) $ such that
\begin{equation}
X\langle V,W\rangle=\langle\nabla_X V,W\rangle+\langle V,\nabla_{\bar X} W\rangle \label{Chern--Finsler}
\end{equation}
for all $X\in T^{1,0}\widetilde M$ and $V,W\in \Gamma(\mathcal V).$
Furthermore, this connection is good.
\end{definition}

In local coordinates, the connection form of $\nabla$ is defined by
\[\omega^i_j=G^{\bar c i}\bar\partial G_{j\bar c}
=:\Gamma^i_{j;k}dz^k+\Gamma^i_{jk}\psi^k,\]
where
\begin{equation}
\Gamma_{j;k}^{i} = \dot\partial_j \Gamma_{;k}^{i},\,
\Gamma_{;k}^{i} =  G^{\bar{s}i} G_{\bar{s};k},\,
\Gamma_{jk}^{i} = G^{\bar{s}i} G_{j\bar{s}k}.\label{con-coe}
\end{equation}
And $\Gamma_{j;k}^{i}, \Gamma_{;k}^{i}$ and $\Gamma_{jk}^{i}$
are called the horizontal, nonlinear and vertical connextion coefficients of $\nabla$, respectively.
Denoting
\[\psi^i=dv^i+\Gamma^i_{;j}dz^j,\quad \delta_i=\partial_i-\Gamma^j_{;i}\dot\partial_j, \]
then $\{dz^i;\psi^i\}$ is the locally dual coframe of $\{\delta_i;\frac{\partial}{\partial v^j}\}.$
 And \(\{\delta_i,i=1,\cdots,n\}\) forms a local frame of the horizontal bundle $\mathcal H$, a sub-bundle of tangent bundle over $\widetilde M$:
\[T^{1,0}\widetilde M=\mathcal H\oplus\mathcal V.\]

 Since $G(z;v)$ is (1,1)-homogeneous with respect to $v$ and $\bar v$, by the Euler homogeneous identity, it is easy to check (cf. \cite{AP})
\begin{eqnarray}
G(z;v)&=&G_{a}(z;v)v^a=G_{\bar a}(z;v)\bar v^a;\\
0&=&G_{ab}(z;v)v^a=G_{ab\bar c }(z;v)v^a; \\ \Gamma^i_{j;k}(z;v)v^j&=&\Gamma^i_{;k}(z;v),\,
\Gamma^i_{jk}(z;v)v^j=0.\label{AP-homo}
\end{eqnarray}
Thus $G_iG^{\bar li}=\bar v^sG_{i\bar s}G^{\bar li}=\bar v^l.$
By definition,
$\Gamma^i_{j;k}=G^{\bar li}\delta_k G_{j\bar l}$,
thus
\[G_i\Gamma^i_{j;k}= G_iG^{\bar li}\delta_k G_{j\bar l}= \bar v^l\delta_k G_{j\bar l}.\]
Notice $\delta_k \bar v^l=0,$ we have
\begin{equation}
G_i\Gamma^i_{j;k}=\bar v^l\delta_k G_{j\bar l}=\delta_k (\bar v^lG_{j\bar l})=\delta_k G_{j}.\label{djG;k}
\end{equation}
Thus our new condition \eqref{n-KF} coincides with
 \begin{eqnarray}
 \delta_cG_b-\delta_bG_c&=&0,\quad\forall 0\neq v\in T_z^{1,0}M,
 \label{new-KF2}
 \end{eqnarray}
which also coincides with
 \begin{eqnarray}
 (\delta_cG_{b\bar a}-\delta_bG_{c\bar a})\bar v^a&=&0,\quad\forall 0\neq v\in T_z^{1,0}M.\label{new-KF3}
 \end{eqnarray}
And the original weakly K\"ahler-Finsler condition \eqref{w-Kahler} coincides with
 \begin{eqnarray}
(\delta_cG_{b}-\delta_bG_{c})v^c=0,\quad\forall 0\neq v\in T_z^{1,0}M.\label{wKF2}
 \end{eqnarray}
Notice $\Gamma^i_{j;k}=G^{\bar li}\delta_k G_{j\bar l}$, and
$G^{\bar li}$ is invertible, the K\"ahler--Finsler condition \eqref{Kahler} equivalents to
 \begin{eqnarray}
 \delta_cG_{b\bar a}-\delta_bG_{c\bar a}=0,\quad\forall 0\neq v\in T_z^{1,0}M,\label{KF2}
 \end{eqnarray}
and the strongly K\"ahler--Finsler condition \eqref{s-Kahler} equivalents to
 \begin{eqnarray}
(\delta_cG_{b\bar a}-\delta_bG_{c\bar a})v^c=0,\quad\forall 0\neq v\in T_z^{1,0}M.\label{sKF2}
 \end{eqnarray}

For the Chern-Finsler connection, its torsion has $(2,0)$ part and $(1,1)$ part as follows (cf. (2.3.19) of \cite{AP})
 \begin{eqnarray}
\theta&=&\theta^\sigma\otimes\delta_\sigma,\,
\theta^\sigma=\frac{1}{2}\left[\Gamma^\sigma_{\nu;\mu}
-\Gamma^\sigma_{\mu;\nu}\right]dz^\mu\wedge dz^\nu+\Gamma^\sigma_{\nu\gamma}\psi^\gamma\wedge dz^\nu;\nonumber\\
\tau&=&\tau^\alpha\otimes\frac{\partial}{\partial v^\alpha},\,
\tau^\alpha
=-\delta_{\bar \nu}\Gamma^\alpha_{;\mu}dz^\mu\wedge dz^{\bar\nu}
-\Gamma^\alpha_{\bar\beta;\mu}dz^\mu\wedge\overline{\psi^\beta},
\label{torsion}
 \end{eqnarray}
where $\Gamma^\alpha_{\bar\beta;\mu}
=\dot\partial_{\bar \beta}\Gamma^\alpha_{;\mu}$
is the mixed $(1,1)$ torsion.

The curvature operator of the Chern-Finsler connection is given by  (cf. (2.3.21) of \cite{AP})
\[\Omega_j^i=\bar\partial \omega_j^i
=R_{j;k\bar l}^i d z^k\wedge d\bar z^l
+R_{jk;\bar l}^i \psi^k\wedge d\bar z^l
+R_{j\bar k;l}^i d z^l\wedge \overline{ \psi^k}
+R_{jk\bar l}^i \psi^k\wedge \overline{\psi^l},\]
where the hv-curvature (the horizontal-vertical part) is given by
\begin{equation}
R_{j\bar k;l}^i
=-\dot\partial_{\bar k}
\Gamma^i_{j;l}-\Gamma^i_{js}\Gamma^s_{\bar k ;l}
=-\dot\partial_{j}
\Gamma^i_{\bar k;l}-\Gamma^i_{js}\Gamma^s_{\bar k ;l}.\label{curvature}
\end{equation}

\section{The stronger condition}\label{wKF}

The proof of Theorem \ref{thm}, the main result of this section for complex dimension \(n>2\), is obtained with the assistance of the Danus automated reasoning system.
The argument is fibrewise, and its central point is that the
fibre coordinates together with the Legendre covector components form a coframe in which the relevant vertical derivatives commute.
 Under local coordinates of $\widetilde M$, denote
\[D_j :=\dot\partial_{j}-G_{ja}G^{\bar ba} \dot\partial_{\bar b}.\]
Thus
\begin{equation}
\delta_k G_{j}
= \left(\partial_{k}
-\Gamma^a_{;k}\dot\partial_{a}\right)\dot\partial_{j}G
= \dot\partial_{j}G_{;k}-G^{\bar ba}G_{\bar b;k}G_{ja}
 = D_jG_{;k},\label{DjG;k}
\end{equation}
Denoting
\[T_{jk}:=G_i\left( \Gamma^i_{j;k}-\Gamma^i_{k;j}\right)
=\delta_kG_{j}-\delta_jG_{k},\]
thus by \eqref{DjG;k},
\begin{equation}
  T_{jk} = D_jG_{;k} -D_kG_{;j}.\label{key0}
\end{equation}
The weakly condition \eqref{w-Kahler} and the new condition  \eqref{n-KF} can be simplified as $T_{ij}v^i=0$ and
$T_{ij}=0$, respectively.

\begin{proposition}[adapted vertical exactness]\label{prop}
 Let $F$ be a strongly pseudoconvex complex Finsler metric. Then
\begin{equation}
  [D_i,D_j]=0,\, D_lT_{jk} +D_jT_{kl} +D_kT_{lj} = 0,\, \forall i,j,k=1,\cdots,n.\label{key}
\end{equation}
\end{proposition}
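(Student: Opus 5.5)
The plan is to work fibrewise, with $z$ frozen, and to find coordinates on the fibre $T^{1,0}_zM\setminus\{0\}$ in which each $D_j$ becomes a coordinate vector field. Both identities in \eqref{key} should then follow formally. Since $D_j$ involves only $\dot\partial_j$ and $\dot\partial_{\bar b}$, with coefficients smooth in $(z;v)$, it is a vertical derivation, and $z$ enters only as a parameter.

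\textbf{Step 1: new fibre coordinates.} I would replace the real coordinates $(v^a,\bar v^a)$ by $(v^a,w_b)$, where $w_b:=G_b(z;v)=\dot\partial_b G$ is the Legendre covector. Holding $v$ fixed, the Jacobian of $\bar v\mapsto w$ is $(\dot\partial_{\bar c}G_b)=(G_{b\bar c})$, which is invertible by strong pseudoconvexity. So by the inverse function theorem $(v,w)$ is a local coordinate system near every point of $\widetilde M$ (with $z$ as a parameter).

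\textbf{Step 2: how $D_j$ acts.} Directly from the definition,
\[D_j v^k=\delta_j^k,\qquad D_j G_b=G_{jb}-G_{ja}G^{\bar c a}G_{b\bar c}=G_{jb}-G_{jb}=0 .\]
A derivation is determined by its values on a coordinate system. Hence $D_j=\partial/\partial v^j$ in the $(v,w)$ coordinates, that is, the derivative in $v^j$ with the covector $w=(G_b)$ held fixed. Coordinate vector fields commute, so $[D_i,D_j]=0$.

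For a direct check instead, one can note that $[D_i,D_j]$ is again a first-order vertical operator, because the second-order parts cancel by symmetry. It annihilates both $v^k$ and $G_b$, since every $D$ maps these functions to constants. It therefore vanishes, by the same coordinate argument.

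\textbf{Step 3: the cyclic identity.} By \eqref{key0}, $T_{jk}=D_jG_{;k}-D_kG_{;j}$, which is the exterior $D$-derivative of the "1-form" $G_{;k}\,dv^k$ in these coordinates. Expanding,
\[D_lT_{jk}+D_jT_{kl}+D_kT_{lj}=(D_lD_j-D_jD_l)G_{;k}+(D_jD_k-D_kD_j)G_{;l}+(D_kD_l-D_lD_k)G_{;j},\]
and each bracket vanishes by Step 2. This is exactly $d^2=0$.

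\textbf{Main obstacle.} The main point requiring care is Step 1. One has to make sure that $(v,w)$ really gives local coordinates on the real fibre, not just on a complex chart, and that $D_j$ (which is not a holomorphic vector field) is correctly identified with $\partial/\partial v^j$. This identification needs $D_j$ to be tested on a full set of $2n$ complex-valued coordinate functions whose differentials span the complexified cotangent space of the fibre. The functions $v^a$ together with $G_b$ do this, because $dG_b\equiv G_{b\bar c}\,d\bar v^c$ modulo $dv$, and $(G_{b\bar c})$ is invertible. Once this is settled, the rest is formal.
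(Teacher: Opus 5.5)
Your proposal is correct and is essentially the paper's argument. The paper also shows that $[D_i,D_j]$ kills $v^k$ and $G_k$ (from $D_iv^k=\delta^k_i$ and $D_iG_k=0$), concludes $[D_i,D_j]=0$ because $\{dv^k, d_vG_k\}$ is a coframe (the block matrix with invertible $(G_{k\bar l})$), and then expands the cyclic sum into commutators exactly as you do. One caution: the ``$(v,w)$ coordinates'' of Step 1 are only formal, since you cannot hold $v$ fixed while varying $\bar v$ in a real inverse-function argument. The rigorous content is therefore your direct check against the complexified coframe, and that is precisely what the paper uses.
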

\begin{proof}
In order to prove $[D_i,D_j]=0,$ we just need to prove the vanishing of the pairs $(\omega^k,[D_i,D_j]),\,k=1,\cdots,2n,$ for some local co-frame $\{\omega^1,\cdots,\omega^{2n}\}$ of $T^\ast_{\mathbb C}M$.
Notice
\[D_iv^k=\delta^k_i,\quad D_iG_{k}=\dot\partial_{i}G_k-G_{il}G^{\bar sl}\dot\partial_{\bar s}G_k=G_{ki}-G_{il}G^{\bar sl}G_{k\bar s}=0,\]
we have the vanishing pairs
\[(dv^k, [D_i,D_j])=0,\quad (d_vG_k, [D_i,D_j])=0,\]
where $d_v$ is the vertical differential.
Notice \(d_vG_k=G_{k l}d v^l+G_{k\bar l}d\bar v^l,\) and
\[\left(\begin{array}{cccccc}
   1&\cdots &0 &  0&\cdots &0 \\
   \vdots&\ddots &\vdots & \vdots&\ddots &\vdots  \\
   0&\cdots & 1 & 0&\cdots &0  \\
   G_{11} &\cdots&G_{1n} & G_{1\bar 1}&\cdots&G_{1\bar n} \\
    \vdots&\ddots& \vdots & \vdots&\ddots& \vdots \\
   G_{nl} &\cdots&G_{nn}& G_{n\bar 1}&\cdots&G_{n\bar n}
  \end{array}\right)
  \left(\begin{array}{cccccc}
   dv^1 \\
   \vdots \\
   dv^n \\
   d\bar v^1\\
   \vdots\\
   d\bar v^n
  \end{array}\right)
  =
  \left(\begin{array}{cccccc}
   dv^1 \\
   \vdots \\
   dv^n \\
   G_{1l}dv^l + G_{1\bar l}d\bar v^l\\
   \vdots\\
   G_{nl}dv^l + G_{n\bar l}d\bar v^l
  \end{array}\right)
  =
  \left(\begin{array}{cccccc}
   dv^1 \\
   \vdots \\
   dv^n \\
   d_vG_1\\
   \vdots\\
   d_vG_n
  \end{array}\right),
\]
thus $\{ dv^1 ,\cdots, dv^n, d_vG_1, \cdots, d_vG_n\}$
 forms a local coframe, hence $[D_i,D_j]=0.$

At last, by $T_{jk} = D_jG_{;k} -D_kG_{;j}$ and $[D_i,D_j]=0$,
we have
\begin{eqnarray*}
&&D_lT_{jk} +D_jT_{kl} +D_kT_{lj}
\\&=& D_l(D_jG_{;k} -D_kG_{;j})
+D_j(D_kG_{;l} -D_lG_{;k})
+D_k(D_lG_{;j} -D_jG_{;l})\\
&=&[D_l,D_j]G_{;k}+[D_j,D_k]G_{;l}+[D_k,D_l]G_{;j}
\\&=&0.
\end{eqnarray*}
\end{proof}

Now we can prove the following equivalence.
\begin{theorem}\label{thm}
The weakly K\"ahler--Finsler condition coincides with
\begin{equation}
G_k(z;v)(\Gamma_{j;i}^{k}(z;v)-\Gamma_{i;j}^{k}(z;v))=0, \quad
\forall  \, (z;v) \in \widetilde{M}, \,i,j = 1,\dots,n,
\end{equation}
which coincides with
 \begin{eqnarray}
\delta_cG_{b}-\delta_bG_{c}=0,\quad\forall  \, (z;v) \in \widetilde{M}, \,b,c = 1,\dots,n.
 \end{eqnarray}
\end{theorem}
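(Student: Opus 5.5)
The plan is to derive the strong condition $T_{jk}=0$ from the weak one $T_{jk}v^j=0$ by contracting the cyclic identity of Proposition \ref{prop} with $v^l$, and then to kill the leftover derivative term using homogeneity. The second equivalence in the statement, with $\delta_cG_b-\delta_bG_c=0$, is already \eqref{new-KF2}, obtained from \eqref{djG;k}. The implication ``new condition $\Rightarrow$ weakly K\"ahler--Finsler'' is trivial. So the only thing to prove is that $T_{jk}v^j=0$ for all $j,k$ implies $T_{jk}=0$, where $T_{jk}=\delta_kG_j-\delta_jG_k$ is the antisymmetric tensor introduced before Proposition \ref{prop}.

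\emph{Step 1: contract the cyclic identity.} Assume the weak condition. Since $T$ is antisymmetric, we have both $T_{lk}v^l=0$ and $T_{kl}v^l=0$. Contract $D_lT_{jk}+D_jT_{kl}+D_kT_{lj}=0$ from \eqref{key} with $v^l$, and use $D_jv^l=\delta^l_j$. For the second term this gives
\[v^lD_jT_{kl}=D_j(v^lT_{kl})-T_{kj}=-T_{kj}=T_{jk}.\]
The third term is handled the same way:
\[v^lD_kT_{lj}=D_k(v^lT_{lj})-T_{kj}=T_{jk}.\]
Hence
\[v^lD_lT_{jk}+2T_{jk}=0.\]

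\emph{Step 2: homogeneity kills the first term.} First, by $G_{la}v^l=0$ from \eqref{AP-homo},
\[v^lD_l=v^l\dot\partial_l-v^lG_{la}G^{\bar ba}\dot\partial_{\bar b}=v^l\dot\partial_l.\]
Next, $G_j$ is homogeneous of bidegree $(0,1)$ in $(v,\bar v)$. The operator $\delta_k=\partial_k-\Gamma^a_{;k}\dot\partial_a$ preserves bidegree, because $\Gamma^a_{;k}$ has bidegree $(1,0)$ and $\dot\partial_a$ lowers the $v$-degree by one. Therefore $T_{jk}$ has bidegree $(0,1)$, and Euler's identity gives $v^l\dot\partial_lT_{jk}=0$. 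Substituting into Step 1 yields $2T_{jk}=0$, which is the new condition \eqref{n-KF}.

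The main obstacle is really Proposition \ref{prop}, in particular the commutation $[D_i,D_j]=0$, and that is already established earlier in the paper. Beyond it, the only point needing care is the bidegree bookkeeping for $\delta_k$ and the identity $v^lD_l=v^l\dot\partial_l$, both of which follow directly from \eqref{AP-homo}.
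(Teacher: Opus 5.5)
Your proposal is correct and is essentially the paper's proof. You contract the cyclic identity of Proposition~\ref{prop} with $v^l$, use $D_jv^l=\delta^l_j$ with the weak condition and antisymmetry to get $v^lD_lT_{jk}+2T_{jk}=0$, and then remove $v^lD_lT_{jk}$ via $v^lD_l=v^l\dot\partial_l$ and Euler's identity for the bidegree-$(0,1)$ quantity $T_{jk}$; the only differences are cosmetic, namely that you fold the paper's auxiliary identities $T_{ik}=-v^jD_iT_{jk}$ and $T_{ik}=-v^jD_kT_{ij}$ directly into the contraction and you justify explicitly why $\delta_k$ preserves bidegree, which the paper only asserts.
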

\begin{proof}
The new condition obviously implies weakly K\"ahler-Finsler condition. On the contrary, if $F$ is a weakly K\"ahler-Finsler metric, i.e., $T_{jk}v^j=0$, we need to prove $T_{ij}=0.$
Notice
\[0=D_i(T_{jk}v^j)=T_{ik}+D_iT_{jk}v^j,\]
we have \begin{equation}T_{ik}=-D_iT_{jk}v^j.\label{d1}\end{equation}
Notice $T_{ij}=-T_{ji}$, we have $T_{ij}v^j=0.$
Similarly, by $D_k(T_{ij}v^j)=0$, we have
\begin{equation}T_{ik}=-D_kT_{ij}v^j.\label{d2}\end{equation}
 By Proposition \ref{prop} we have
 \begin{equation}
 v^l(D_lT_{jk} +D_jT_{kl} +D_kT_{lj}) = 0.\label{d3}
 \end{equation}
Substituting \eqref{d1} and \eqref{d2} in  \eqref{d3} provides
 \[v^lD_lT_{jk} -T_{kj} -T_{kj} = 0,\]
 thus
 \begin{equation}
 2T_{kj}=v^lD_lT_{jk}.\label{d4}
 \end{equation}
Notice that both $T_{jk}$ and $G_{s}$ are $(0,1)$-homogeneous with respect to $v$ and $\bar v$, using the Euler homogeneous identity for $G_{s}$, we have
$v^lG_{ls}=0,$ thus
\[v^lD_l
=v^l\left(\dot\partial_{l}-G_{ls}G^{\bar rs}\dot\partial_{\bar r}\right)
=v^l\dot\partial_{l}.\]
Thus by \eqref{d4} and the Euler homogeneous identity for $T_{jk}$, we have
\[2T_{jk}=v^lD_lT_{jk}=v^l\dot\partial_{l}T_{jk}=0.\]
\end{proof}

\begin{remark}
Similarly, denoting $T_{jk\bar l}:=\delta_kG_{j\bar l}-\delta_kG_{j\bar l},$ we also have
\begin{equation}
   \delta_kG_{j\bar l}= D_jG_{\bar l;k},\, D_iT_{jk\bar l} +D_jT_{ki\bar l} +D_kT_{ij\bar l} = 0,\, \forall i,j,k,l=1,\cdots,n.\label{simlar}
\end{equation}
And the methods in the proof of Theorem \ref{thm} also applies for the equivalence between K\"ahler--Finsler and strongly  K\"ahler--Finsler conditions, which provides a new proof of Theorem \ref{CHSH}.
\end{remark}

\section{K\"ahler-Finsler versus weakly K\"ahler--Finsler }
Since we obtain a stronger condition for weakly K\"ahler--Finsler, it is nature to consider the relation between K\"ahler-Finsler and weakly K\"ahler--Finsler conditions.
In \cite{CGZhou}, N. Cui, J. Guo and L. Zhou provided some singular weakly K\"ahler--Finsler examples which are not K\"ahler--Finsler.
So these two notions should be different in general.
In \cite{CS}, B. Chen and Y. Shen give a sufficient condition for weakly K\"ahler--Finsler metrics to be K\"ahler--Finsler.
\begin{theorem}[\cite{CS}]\label{CS2}
If the weak hv-curvature of the Chern--Finsler connection of a complex Finsler metric $F$ vanishes identically,
then $F$ is K\"ahler--Finsler if and only if it is weakly K\"ahler-Finsler.
\end{theorem}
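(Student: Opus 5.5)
The plan is to reduce everything to the torsion-type tensors already introduced in Section \ref{wKF}, and then to identify the weak hv-curvature as a combination of the two contracted torsion quantities in \eqref{4} and \eqref{w+new}. I take the weak hv-curvature to be the contraction $G_iR^i_{j\bar k;l}v^l$ of \eqref{curvature}; this reading of the definition in \cite{CS} is an assumption. Since Kähler--Finsler trivially implies weakly Kähler--Finsler, only the converse needs proof.

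\emph{Step 1: reduce Kähler--Finsler to \eqref{w+new}.} Assume $F$ is weakly Kähler--Finsler. By Theorem \ref{thm} we then have the full vanishing $T_{bc}=\delta_cG_b-\delta_bG_c=0$, not just its contraction with $v$. I would apply $\dot\partial_{\bar l}$ to this identity. Since $\delta_c=\partial_c-\Gamma^j_{;c}\dot\partial_j$ and $\dot\partial_{\bar l}\Gamma^j_{;c}=\Gamma^j_{\bar l;c}$, the commutator is
\[
[\dot\partial_{\bar l},\delta_c]=-\Gamma^j_{\bar l;c}\dot\partial_j .
\]
This gives $\dot\partial_{\bar l}\delta_cG_b=\delta_cG_{b\bar l}-\Gamma^j_{\bar l;c}G_{bj}$. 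Differentiating $T_{bc}=0$ therefore yields
\[
\delta_cG_{b\bar l}-\delta_bG_{c\bar l}=\Gamma^j_{\bar l;c}G_{bj}-\Gamma^j_{\bar l;b}G_{cj}.
\]
Contract this with $v^c$ and use $G_{cj}v^c=0$ from the Euler identities \eqref{AP-homo}. The left side is the quantity whose vanishing is \eqref{sKF2}, the strongly Kähler--Finsler condition, equivalent to Kähler--Finsler by Theorem \ref{CHSH}. The right side reduces to $G_{bj}\Gamma^j_{\bar l;c}v^c$. Hence, for weakly Kähler--Finsler metrics, Kähler--Finsler is equivalent to \eqref{w+new}.

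\emph{Step 2: express the weak hv-curvature.} Contract \eqref{curvature} with $G_i$ and $v^l$. First, $G_i\Gamma^i_{js}=G_iG^{\bar ri}G_{j\bar rs}=\bar v^rG_{j\bar rs}=G_{js}$, using $G_iG^{\bar ri}=\bar v^r$ and the homogeneity of $G_j$. This gives
\[
G_iR^i_{j\bar k;l}v^l=-G_i\dot\partial_j\Gamma^i_{\bar k;l}v^l-G_{js}\Gamma^s_{\bar k;l}v^l .
\]
So vanishing weak hv-curvature says that the two quantities appearing in \eqref{4} and \eqref{w+new} sum to zero.

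\emph{Step 3: separate the two terms (main obstacle).} It remains to show that, for weakly Kähler--Finsler metrics, the two terms in Step 2 are individually controlled, so that their sum vanishing forces \eqref{w+new}. I would start from $G_i\Gamma^i_{\bar k;l}v^l=\dot\partial_{\bar k}(G_i\Gamma^i_{;l})v^l-G_{i\bar k}\Gamma^i_{;l}v^l$. Here $G_i\Gamma^i_{;l}=\delta_lG_jv^j$ by \eqref{djG;k} and \eqref{AP-homo}, and Theorem \ref{thm} allows $\delta_lG_j$ to be symmetrised. Then I would apply $\dot\partial_j$ and use the Euler identities to compute $G_i\dot\partial_j\Gamma^i_{\bar k;l}v^l+G_{ji}\Gamma^i_{\bar k;l}v^l$ in terms of $T$ and its vertical derivatives, which vanish by Theorem \ref{thm}.

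The hoped-for outcome is a second linear relation between the two terms, for instance that they are equal (as in the third criterion of Theorem \ref{tor}) or that one of them vanishes. Combined with the vanishing sum from Step 2, this would give \eqref{w+new}, and Step 1 then concludes. The delicate point is the bookkeeping of signs, and of which index is contracted with $v$, in this last relation. I expect this step to need the $D_j$-calculus of Proposition \ref{prop}, in particular $D_jG_k=0$ and $v^lD_l=v^l\dot\partial_l$, to convert vertical derivatives of $G_i$ into $G_{ij}$ terms cleanly.
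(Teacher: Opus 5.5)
Your Step 1 is correct. It is essentially the paper's Theorem \ref{tor-eq}, obtained more cleanly from $[\dot\partial_{\bar l},\delta_c]=-\Gamma^j_{\bar l;c}\dot\partial_j$. Also, the $v^c$-contracted left side is directly the K\"ahler--Finsler condition \eqref{Kahler}, so Theorem \ref{CHSH} is not needed there.

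The genuine gap is your reading of the hypothesis: $G_iR^i_{j\bar k;l}$ vanishes identically for \emph{every} strongly pseudoconvex complex Finsler metric. Differentiating $G_i\Gamma^i_{;l}=G_{;l}$ in $\bar v^k$ and using $G_{i\bar k}\Gamma^i_{;l}=G_{\bar k;l}$ gives $G_i\Gamma^i_{\bar k;l}\equiv0$; this is the computation preceding \eqref{iff2}. Your own Step 2 formula therefore reads
\[
G_iR^i_{j\bar k;l}=-G_i\dot\partial_j\Gamma^i_{\bar k;l}-G_{js}\Gamma^s_{\bar k;l}=-\dot\partial_j\left(G_i\Gamma^i_{\bar k;l}\right)=0 .
\]
So ``the two terms sum to zero'' is just \eqref{iff2} contracted with $v^l$, and under your reading the hypothesis carries no information.

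Consequently Step 3 cannot be completed. Your starting expression $G_i\Gamma^i_{\bar k;l}v^l$ is identically zero, and applying $\dot\partial_j$ to it only reproduces the Step 2 relation. More decisively, any second relation extracted from the weakly K\"ahler--Finsler condition alone that forced \eqref{w+new} would make every weakly K\"ahler--Finsler metric K\"ahler--Finsler. That contradicts Theorem \ref{Kob-nKF}.

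The missing ingredient is the uncontracted hypothesis. The paper does not reprove this theorem, but Remark \ref{CS-iff} records that the argument of \cite{CS} uses $R^i_{j\bar k;l}v^l=0$. With that hypothesis, the useful contraction is with $v^j$, not with $G_i$. Since $\Gamma^i_{j;l}v^j=\Gamma^i_{;l}$ and $\Gamma^i_{js}v^j=0$ by \eqref{AP-homo}, formula \eqref{curvature} gives $R^i_{j\bar k;l}v^j=-\dot\partial_{\bar k}\Gamma^i_{;l}=-\Gamma^i_{\bar k;l}$. Therefore
\[
0=R^i_{j\bar k;l}v^lv^j=-\Gamma^i_{\bar k;l}v^l=-\dot\partial_{\bar k}\mathbb G^i .
\]
So the geodesic spray is holomorphic in $v$, and \eqref{w+new} holds trivially; so does \eqref{4}, by \eqref{iff2}. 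Your Step 1 then gives the K\"ahler--Finsler property. Contracting the hv-curvature with $G_i$ is exactly the operation that destroys the content of the hypothesis.
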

\begin{remark}\label{CS-iff}
Indeed, in their prove of Theorem \ref{CS2}, they just
used the condition $R^i_{j\bar k;l}v^l=0$, and there is a sufficient and necessary condition for a weakly K\"ahler--Finsler metric to be K\"ahler--Finsler:
\[G_i\dot\partial_{\bar l}\Gamma^i_{j;k}v^k=0,\]
which also can be written as
\begin{equation}
G_i\dot\partial_{j}\Gamma^i_{\bar l;k}v^k=0.\label{1tor}
\end{equation}
\end{remark}
Here we give another sufficient and necessary condition for weakly K\"ahler--Finsler metric to be K\"ahler--Finsler using the mixed (1,1)-torsion of the Chern--Finsler connection of $F$.
\begin{theorem}\label{tor-eq}
A weakly K\"ahler--Finsler metric is K\"ahler--Finsler if and only if
\begin{equation}
G_{kl}\dot\partial_{\bar i}\mathbb G^k=0, \quad
\forall  \, (z;v) \in \widetilde{M}, \,i,l = 1,\dots,n,\label{2tor}
\end{equation}
where $\mathbb G^k(z;v):=\Gamma_{;j}^{k}(z;v)v^j$ is the complex geodesic spray.
\end{theorem}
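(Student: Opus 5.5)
The plan is to reduce the statement to the Chen--Shen criterion in Remark \ref{CS-iff}. That remark says a weakly K\"ahler--Finsler metric is K\"ahler--Finsler if and only if \eqref{1tor} holds. So it suffices to prove the pointwise identity
\begin{equation*}
G_{kl}\,\dot\partial_{\bar i}\mathbb G^k=-\,G_k\,\dot\partial_l\Gamma^k_{\bar i;j}\,v^j
\quad\text{on } \widetilde M,
\end{equation*}
valid for \emph{every} strongly pseudoconvex complex Finsler metric. Once this holds, \eqref{2tor} and \eqref{1tor} are literally the same condition, and the theorem follows from Remark \ref{CS-iff} (with the weakly K\"ahler--Finsler hypothesis entering only through that remark).

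The first step is the basic identity $G_k\Gamma^k_{\bar i;j}=0$. We have $\Gamma^k_{\bar i;j}=\dot\partial_{\bar i}\Gamma^k_{;j}$, and from \eqref{con-coe} together with $G_kG^{\bar sk}=\bar v^s$ we get $G_k\Gamma^k_{;j}=\bar v^sG_{\bar s;j}=G_{;j}$, by Euler homogeneity. Also $G_{k\bar i}\Gamma^k_{;j}=G_{k\bar i}G^{\bar sk}G_{\bar s;j}=G_{\bar i;j}$. Applying $\dot\partial_{\bar i}$ to $G_k\Gamma^k_{;j}=G_{;j}$ then gives
\begin{equation*}
G_k\Gamma^k_{\bar i;j}=G_{\bar i;j}-G_{k\bar i}\Gamma^k_{;j}=0 .
\end{equation*}
Since $v^j$ is annihilated by $\dot\partial_{\bar i}$, we have $\dot\partial_{\bar i}\mathbb G^k=\Gamma^k_{\bar i;j}v^j$. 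Hence the identity also yields $G_k\dot\partial_{\bar i}\mathbb G^k=0$ identically.

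The second step is to differentiate $G_k\dot\partial_{\bar i}\mathbb G^k=0$ with respect to $v^l$:
\begin{equation*}
0=G_{kl}\dot\partial_{\bar i}\mathbb G^k+G_k\dot\partial_l\big(\Gamma^k_{\bar i;j}v^j\big)
=G_{kl}\dot\partial_{\bar i}\mathbb G^k+G_k\dot\partial_l\Gamma^k_{\bar i;j}\,v^j+G_k\Gamma^k_{\bar i;l}.
\end{equation*}
By the first step the last term vanishes, which gives the claimed identity. Substituting it into the criterion of Remark \ref{CS-iff} completes the proof. The same computation also shows that the third condition listed in the introduction's theorem, equality of the two expressions, is equivalent to each of them vanishing.

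The only real obstacle is bookkeeping: keeping $\dot\partial_{\bar i}$ and $\dot\partial_l$ acting on the correct factors. The key facts to use are that $v^j$ is holomorphic in the fibre, and that $G_{;j}$ and $G_{\bar i;j}$ are related by $\dot\partial_{\bar i}G_{;j}=G_{\bar i;j}$. I would double-check the first step carefully, since everything hinges on that cancellation. No use of the stronger condition from Theorem \ref{thm} seems necessary.
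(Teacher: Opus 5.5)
Your argument is correct, but it runs in the opposite logical order from the paper's.

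The paper proves Theorem~\ref{tor-eq} directly. It uses Theorem~\ref{thm} to get $(\delta_cG_{b\bar a}-\delta_bG_{c\bar a})\bar v^a=0$, differentiates in $\bar v^k$, simplifies by Euler homogeneity, and contracts with $v^b$. This gives the explicit formula \eqref{tor4}, $(\delta_bG_{c\bar k}-\delta_cG_{b\bar k})v^b=G_{cd}\,\dot\partial_{\bar k}\mathbb G^d$. Only afterwards does it use \eqref{iff2}, whose contraction with $v^k$ is exactly your identity, to pass to \eqref{1tor}. So in the paper the Chen--Shen criterion is an output, appearing in Theorem~\ref{tor}.

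You instead take Remark~\ref{CS-iff} as input and use the same identity to show that \eqref{2tor} and \eqref{1tor} coincide for every metric. Your route never touches Theorem~\ref{thm} or the $D_i$ machinery. However, it rests on Remark~\ref{CS-iff}, which the paper does not prove; it only attributes it to the structure of Chen and Shen's argument.

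You can remove that dependence in one line. Apply $\dot\partial_{\bar l}$ to the weak condition $G_k(\Gamma^k_{j;i}-\Gamma^k_{i;j})v^j=0$ and use $G_k\dot\partial_{\bar l}\Gamma^k_{j;i}v^j=G_k\Gamma^k_{\bar l;i}=0$ (your first step). This gives
\[
G_{k\bar l}\big(\Gamma^k_{j;i}-\Gamma^k_{i;j}\big)v^j=G_k\,\dot\partial_{\bar l}\Gamma^k_{i;j}\,v^j .
\]
Invertibility of $(G_{k\bar l})$ then yields Remark~\ref{CS-iff}. Combined with your identity, this reproduces \eqref{tor4} from the weak condition alone, so your proof becomes fully self-contained.

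The same observation applies to the paper's own proof. Since $\dot\partial_{\bar k}$ commutes with contraction by $v^b$, one can differentiate the contracted weak identity rather than the full identity supplied by Theorem~\ref{thm}. So the stronger condition is not actually needed for Theorem~\ref{tor-eq}.
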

\begin{proof}
Let $F$ be a weakly  K\"ahler--Finsler metric, by Theorem \ref{thm} and \eqref{new-KF3}, we have
 \begin{eqnarray}
0=\delta_cG_{b}-\delta_bG_{c}
=(\delta_cG_{b\bar a}-\delta_bG_{c\bar a})\bar v^a,
\quad\forall 0\neq v\in T_z^{1,0}M.
 \end{eqnarray}
 Thus
 \begin{eqnarray*}
0=\dot\partial_{\bar k}\left[(\delta_cG_{b\bar a}-\delta_bG_{c\bar a})\bar v^a\right]
=\dot\partial_{\bar k}(\delta_cG_{b\bar a}-\delta_bG_{c\bar a})\bar v^a+\delta_cG_{b\bar k}-\delta_bG_{c\bar k},
 \end{eqnarray*}
hence
 \begin{eqnarray}
\delta_bG_{c\bar k}-\delta_cG_{b\bar k}
&=&\dot\partial_{\bar k}(\delta_cG_{b\bar a}-\delta_bG_{c\bar a})\bar v^a\nonumber\\
&=&\dot\partial_{\bar k}
\left(G_{b\bar a;c}-\Gamma_{;c}^dG_{b\bar ad}-G_{c\bar a;b}+\Gamma_{;b}^dG_{c\bar ad}\right)\bar v^a.\label{tor2}
 \end{eqnarray}
 Notice by the Euler homogeneous identity,
 \[0=G_{b\bar a\bar k;c}=G_{b\bar ad\bar k}\bar v^a
 =G_{b\bar a\bar k;c}=G_{c\bar ad\bar k}\bar v^a,
 \,G_{b\bar ad}\bar v^a=G_{bd},
 \,G_{c\bar ad}\bar v^a=G_{cd},\]
\eqref{tor2} reduces to
 \begin{eqnarray}
\delta_bG_{c\bar k}-\delta_cG_{b\bar k}
&=&-\dot\partial_{\bar k}\Gamma_{;c}^dG_{bd}
+\dot\partial_{\bar k}\Gamma_{;b}^dG_{cd}.\label{tor3}
 \end{eqnarray}
 By \eqref{tor3} and $G_{bd}v^b=0$, we have
 \begin{eqnarray}
\left(\delta_bG_{c\bar k}-\delta_cG_{b\bar k}\right)v^b
&=&v^b\dot\partial_{\bar k}\Gamma_{;b}^dG_{cd}
=v^b\Gamma_{ \bar v^k;b}^dG_{cd}.\label{tor4}
 \end{eqnarray}
 Thus $\left(\delta_bG_{c\bar k}-\delta_cG_{b\bar k}\right)v^b=0$ if and only if $v^b\Gamma_{ \bar v^k;b}^dG_{cd}=0.$
\end{proof}

For a  K\"ahler-Finsler metric, \eqref{1tor} and  \eqref{2tor} imply
\begin{equation}
\dot\partial_{j}\left(G_i\Gamma^i_{\bar l;k}\right)v^k
=G_i\dot\partial_{j}\Gamma^i_{\bar l;k}v^k
+G_{ij}\Gamma^i_{\bar l;k}v^k
=0.\label{null}
\end{equation}

On the other hand,
\begin{eqnarray*}
G_i\Gamma^i_{\bar l;k}
&=&G_i\dot\partial_{\bar l}\Gamma^i_{;k}\\
&=&G_i\dot\partial_{\bar l}\left(
G^{\bar ai}G_{\bar a;k}\right)\\
&=&G_i\left(-G^{\bar ab}G_{b\bar c\bar l}G^{\bar ci}G_{\bar a;k}
+G^{\bar ai}G_{\bar a\bar l;k}\right)\\
&=&-G^{\bar ab}G_{b\bar c\bar l}\bar v^{ c}G_{\bar a;k}
+\bar v^{a}G_{\bar a\bar l;k}\\
&=&0.
\end{eqnarray*}
Take action $\dot\partial_{j}$ on both sides provides
\begin{eqnarray}
G_i\dot\partial_{j}\Gamma^i_{\bar l;k}
+G_{ij}\Gamma^i_{\bar l;k}=0.\label{iff2}
\end{eqnarray}
Thus
\begin{eqnarray}
G_i\dot\partial_{j}\Gamma^i_{\bar l;k}v^k
=G_{ij}\Gamma^i_{\bar l;k}v^k \iff G_i\dot\partial_{j}\Gamma^i_{\bar l;k}v^k
=G_{ij}\Gamma^i_{\bar l;k}v^k=0.\label{iff3}
\end{eqnarray}
Thus by \eqref{iff2}, \eqref{iff3} and Theorem \ref{tor-eq}, we have the following result.
\begin{theorem}\label{tor}
A weakly K\"ahler--Finsler metric is K\"ahler--Finsler if and only if one of the following holds:
 \begin{equation}
G_i(z;v)\dot\partial_{j}\Gamma^i_{\bar l;k}(z;v)v^k=0,
 \quad \forall\, (z;v) \in \widetilde{M}, \,j,l = 1,\dots,n;\label{1}
\end{equation}
   \begin{equation}
G_{ij}(z;v)\Gamma_{\bar l;k}^{i}(z;v)v^k=0, \quad
\forall  \, (z;v) \in \widetilde{M}, \,j,l = 1,\dots,n;\label{2}
\end{equation}
 \begin{equation}
G_i(z;v)\dot\partial_{j}\Gamma^i_{\bar l;k}(z;v)v^k
=G_{ij}(z;v)\Gamma_{\bar l;k}^{i}(z;v)v^k, \,
\forall  \, (z;v) \in \widetilde{M}, \,j,l = 1,\dots,n.\label{3}
\end{equation}
\end{theorem}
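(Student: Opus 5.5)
The plan is to assemble Theorem \ref{tor} from three results already in hand: Remark \ref{CS-iff}, Theorem \ref{tor-eq}, and the identity \eqref{iff2}. Throughout, $F$ is a weakly K\"ahler--Finsler metric. By Theorem \ref{thm} we may use the stronger form $\delta_cG_b=\delta_bG_c$ whenever it is convenient.

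\textbf{Step 1: equivalence with \eqref{2}.} Since $\mathbb G^k=\Gamma^k_{;j}v^j$, we have
\[\dot\partial_{\bar l}\mathbb G^k=\Gamma^k_{\bar l;j}v^j.\]
Hence condition \eqref{2tor} of Theorem \ref{tor-eq} is literally \eqref{2}, after renaming indices and using the symmetry $G_{kl}=G_{lk}$. So Theorem \ref{tor-eq} gives directly: $F$ is K\"ahler--Finsler iff \eqref{2} holds.

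\textbf{Step 2: equivalence with \eqref{1}.} Remark \ref{CS-iff} states that, for weakly K\"ahler--Finsler metrics, K\"ahler--Finsler is equivalent to \eqref{1tor}, which is exactly \eqref{1}. To make this self-contained rather than relying on the remark, contract \eqref{iff2} with $v^k$:
\[G_i\dot\partial_j\Gamma^i_{\bar l;k}v^k=-G_{ij}\Gamma^i_{\bar l;k}v^k.\]
This holds for every strongly pseudoconvex metric, because $G_i\Gamma^i_{\bar l;k}\equiv0$ was shown just before \eqref{iff2}. It follows that \eqref{1} and \eqref{2} are equivalent, and Step 1 then yields the claim for \eqref{1} independently of \cite{CS}.

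\textbf{Step 3: equivalence with \eqref{3}.} By the same contracted identity, write $A:=G_{ij}\Gamma^i_{\bar l;k}v^k$. Condition \eqref{3} reads $-A=A$, i.e. $A=0$, which is \eqref{2}. This is precisely \eqref{iff3}. Combined with Step 1, this finishes the proof.

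The only step needing care is confirming the identity $G_i\Gamma^i_{\bar l;k}=0$ and its $v$-derivative \eqref{iff2}. The computation above it uses $G_iG^{\bar a i}=\bar v^a$ together with the Euler identities $G_{b\bar c\bar l}\bar v^c=0$ and $\bar v^aG_{\bar a\bar l;k}=0$, the latter by $(0,1)$-homogeneity of $G_{\bar l;k}$ in $\bar v$. Once that is checked, everything else is index bookkeeping, so I expect no real obstacle beyond verifying Theorem \ref{tor-eq}, which is already established.
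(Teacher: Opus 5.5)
Your proposal is correct and follows the paper's argument: \eqref{2} is Theorem \ref{tor-eq} rewritten via $\dot\partial_{\bar l}\mathbb G^k=\Gamma^k_{\bar l;j}v^j$, and the $v^k$-contraction of \eqref{iff2} gives \eqref{1}$\iff$\eqref{2}$\iff$\eqref{3}, which is exactly \eqref{iff3}. One small slip: $\bar v^aG_{\bar a\bar l;k}=0$ holds because $G_{\bar l;k}$ has degree $0$ in $\bar v$ (it is $(1,0)$-homogeneous in $(v,\bar v)$), whereas degree $1$ in $\bar v$, as your ``$(0,1)$'' suggests, would instead give $\bar v^aG_{\bar a\bar l;k}=G_{\bar l;k}$.
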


\section{Weakly K\"ahler--Finsler Kobayashi metric}
In this section, we provide non-K\"ahler--Finsler, smooth weakly K\"ahler--Finsler metric using the Kobayashi metric of smooth bounded strongly convex domains. In the following we use notions $\dot \psi:=\frac{\partial \psi}{\partial\zeta}$ for $\zeta\in \mathbb D,$ $\lambda':=\frac{\partial \lambda}{\partial\varepsilon}$ for $\varepsilon\in \mathbb R,$ and we use lower indices to represent components
of points and vectors for calculation convenience.

Recall that:
for a smooth bounded strongly convex domain $\Omega$, and \((z,v) \in \widetilde{\Omega}\), a holomorphic map \(f_{z, v}: \mathbb D \to \Omega\) is called extremal with respect to \((z, v)\), if $f_{z,v}$ satisfies
\[f_{z,v}(0)=z,\,\dot f_{z,v}(0)=\lambda v\]
with \(\vert \lambda\vert\) being maximal (In case $\Omega$ is Reinhardt, we can assume $\lambda\in \mathbb R$).
In \cite{Lem81} (in France),  Lempert obtained the following properties (cf. also \cite{Lem82,Lem84}, and \cite{RWK13} for bounded convex domain case and without boundary regularity assumption), for any $(z,v)\in \widetilde \Omega$:

(L1) there is a unique extremal map $f_{z,v}:\mathbb D\to \Omega$ such that $ f(0)=z,\, \dot f(0)=\lambda v$.

(L2) $f_{z,v}$ is the unique geodesic complex curve through $(z,v)$, i.e., it is an isometry with respect to the Poincar\'e distance of $\mathbb D$ and the Kobayashi distance of $\Omega$, thus  $F_K(z;v)=\frac1\lambda$.

(L3) Smooth dependence: $(z,v)\mapsto f_{z,v}$ and
$(z,v)\mapsto\lambda(z,v)$ smoothly depend on $\widetilde \Omega.$ And $f_{z,v}$ extends smoothly to $\bar{\mathbb D}$, embedding $\bar{\mathbb D}$ into $\bar \Omega$, $f(\partial\mathbb D)\subset \partial\Omega$.

(L4) Dual-map criterion: a holomorphic immersion $\varphi:\bar{\mathbb D}\to\bar \Omega$, smooth up to the boundary, with $\varphi(\partial\mathbb D)\subset \partial \Omega$, is extremal if and only if there exists a real map $p>0$ defined (for almost every point) on $\partial\mathbb D$, such that
\[H_j(\zeta):=\zeta p(\zeta)\rho_{;j}(\varphi(\zeta)),\,j=1,2,\, \rho_{;j}=\partial_j\rho,\]
extends holomorphically to $\mathbb D$.

Especially, here we consider
\(\Omega:=\{z\in \mathbb C^2: \vert z_1\vert^2+\vert z_2\vert^2+t\vert z_1\vert^4<1 \},\,t>0,\)
a complete Reinhardt, strongly convex domain  with algebraic boundary. For $\Omega$ we have
\begin{equation}
H_j(\zeta):=\zeta p(\zeta)\rho_{;j}(\varphi(\zeta)),\,j=1,2,\, \rho_{;1}(z)=\bar z_1(1+2t\vert z_1\vert^2),\,
\rho_{;2}(z)=\bar z_2.\label{L4-H}
\end{equation}
Moreover, $p$ is unique up to a positive constant, and the inner product $\dot\varphi\cdot H:=\sum_j\dot\varphi_jH_j$ is a real constant on $\bar{\mathbb D}$ (it is holomorphic, and on $\partial\mathbb D$ it equals $p\zeta\dot\varphi\cdot \rho'$, where $\zeta\dot\varphi\cdot \rho'\in\mathbb R$ since $\frac{d}{d\theta}\rho(\varphi(e^{i\theta}))
=2\operatorname{Re}[i\zeta\dot\varphi\cdot \rho']=0$).
It is known that the Kobayashi metric $F_K$ of $\Omega$ is weakly K\"ahler--Finsler, and we will prove that $F_K$ is not K\"ahler-Finsler.
\begin{theorem}\label{Kob-nKF}
The Kobayashi metric of $\Omega$ is not K\"ahler-Finsler.
\end{theorem}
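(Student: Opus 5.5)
We argue by contradiction, combining Theorem \ref{tor} with an explicit perturbative computation of the extremal discs of $\Omega$. By Theorem \ref{tor}, since $F_K$ is weakly K\"ahler--Finsler (Abate--Patrizio), it is K\"ahler--Finsler if and only if \eqref{2} holds, i.e. $G_{kl}\dot\partial_{\bar i}\mathbb G^k=0$. So it suffices to exhibit one point $(z;v)\in\widetilde\Omega$ and indices $i,l$ at which $G_{kl}\dot\partial_{\bar i}\mathbb G^k\neq 0$. We prefer \eqref{2} to \eqref{KF2} because it involves only the spray $\mathbb G^k$, hence only the geodesic equation, and this is governed by the extremal discs. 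Since $F_K$ is a weakly K\"ahler--Finsler metric of constant holomorphic curvature, the extremal discs $f_{z,v}$ are exactly the complex geodesics. Their second derivative at $0$ therefore determines $\mathbb G^k(z;v)$ through the geodesic equation, roughly $\ddot f^k(0)+\lambda^2\mathbb G^k(z;v)+(\text{reparametrization term})\lambda^2 v^k=0$. Making this identity precise, with the normalization $\dot f(0)=\lambda v$, is the first step, and we would work it out in detail from (L1)--(L3).

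Second, we compute the extremal discs perturbatively. We do not need $F_K$ in closed form, only its $2$-jet in $v,\bar v$ near a well-chosen point. Natural base points are $z=0$, or points on the axes $\{z_2=0\}$ or $\{z_1=0\}$, where the Reinhardt symmetry $(z_1,z_2)\mapsto(e^{i\alpha}z_1,e^{i\beta}z_2)$ gives explicit discs. For example, through $z=(0,a)$ the disc in the $z_2$-direction is a disc in $\{z_1=0\}\cap\Omega$, which is a ball slice. We then perturb the direction $v=(\varepsilon,1)$ and solve the dual-map criterion (L4) with the ansatz $\varphi=\varphi^{(0)}+\varepsilon\varphi^{(1)}+O(\varepsilon^2)$, $p=p^{(0)}+\varepsilon p^{(1)}+\cdots$, and $H_j$ given by \eqref{L4-H}. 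At first order the requirement that $H_j$ extend holomorphically becomes a linear Riemann--Hilbert-type problem on $\partial\mathbb D$. Because $\rho$ is a polynomial in $|z_1|^2,|z_2|^2$, this problem can be solved with finite Laurent polynomials. This gives the dependence of $\ddot f(0)$ on $v$ and $\bar v$ to the order needed for one $\dot\partial_{\bar i}$ derivative. As a check, for $t=0$ the answer must reduce to the ball, where everything is K\"ahler.

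Third, we assemble $G_{kl}\dot\partial_{\bar i}\mathbb G^k$. Here $G_{kl}=\dot\partial_k\dot\partial_l F_K^2$ requires the $(2,0)$-part of the vertical Hessian, which we also read off from $\lambda(z,v)$ in the expansion. If $G_{kl}$ happens to vanish at the symmetric base point, we move along the perturbation parameter and look at the first nonvanishing order in $t$. In fact, it may be cleaner to expand everything in $t$ to first order around the ball: the first-order terms in $t$ of $\mathbb G^k$ and of $G_{kl}$ are computable by linearizing (L4) at the ball's linear discs. The product $G_{kl}\dot\partial_{\bar i}\mathbb G^k$, however, is quadratic in the deviation from the ball, because both factors vanish for $t=0$ ($G_{kl}=0$ and $\dot\partial_{\bar i}\mathbb G^k=0$ in the Hermitian case). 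So we will need each factor to first order in $t$ and then show that the leading $t^2$ coefficient of the product is nonzero at some explicit $(z;v)$.

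The main obstacle is this last step: solving the linearized dual-map problem explicitly enough, with correct bookkeeping of the normalization of $p$, the reality of $\dot\varphi\cdot H$, and the rotation freedom $\lambda\in\mathbb R$, to obtain a concrete nonzero number. We would first try the base point $z=(0,a)$ with $v$ near $(0,1)$, where the unperturbed disc lies in the $z_2$-axis and the $z_1$-component enters only linearly.
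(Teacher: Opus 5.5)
Your reduction is the paper's: by Theorem \ref{tor}, it suffices to find one point where $G_{kl}\dot\partial_{\bar i}\mathbb G^k\neq0$. Everything after that, however, is a plan. The entire content of the theorem, an explicit evaluation showing that this quantity is nonzero somewhere, is left undone, as you acknowledge. Nothing you write excludes that it vanishes identically, and that was exactly the open question.

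Worse, the setups you name cannot detect it at the order you intend to compute.
\begin{itemize}
\item Over $z=0$ the quantity vanishes for every $v$ and every $t$. Since $z\mapsto -z$ is an automorphism, $G(-z;v)=G(z;v)$, so $G_{\bar s;k}(0;v)=0$ and $\mathbb G(0;\cdot)\equiv0$.
\item At $z=(0,a)$ with $v=(v_1,1)$, the rotation $z_1\mapsto e^{i\alpha}z_1$ fixes $((0,a);(0,1))$. This forces both $G_{kl}$ and $\dot\partial_{\bar i}\mathbb G^k$ to vanish there.
\item Moreover $f_1=O(|v_1|)$ along the perturbed discs, so $t$ enters $\rho$ only at order $|v_1|^4$ and $\rho_{;1}$ only at order $|v_1|^3$. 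Your first-order (indeed also second-order) linearized dual-map problem is therefore literally the one for the unit ball, whose Kobayashi metric is Hermitian, and it returns zero.
\item The quantity is then at best $O(|v_1|^4)$. Detecting it would require expanding the discs around the non-linear disc $\zeta\mapsto(0,\frac{\zeta+a}{1+\bar a\zeta})$ to third order in $v_1$, and $\lambda$ to fourth order.
\item The expansion in $t$ has the same defect. It cannot be done over the origin, and you give no reason why the $t^2$ coefficient should be nonzero anywhere else.
\end{itemize}

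The paper avoids this by perturbing the base point rather than the direction. It takes $z=\varepsilon w$ near $0$ with $v$ fixed, eventually $v=v^\ast$ with $|v^\ast_1|=|v^\ast_2|$.
\begin{itemize}
\item Over the origin the extremal discs are $\zeta\mapsto\zeta u$ with $p\equiv1$ and constant dual map, and $G(0;v)$ is explicit.
\item The first-order variation truncates to $\psi=w+\frac12\ddot\psi(0)\zeta^2$.
\item The zero mode of the second-order boundary condition gives $\lambda''$, hence $G$ to second order in $z$.
\item At $(z_1,0)$ the quantity equals $\bar z_1v_1\,g_{ij}\dot\partial_{\bar l}(g^{\bar m i}\dot\partial_{\bar m}\alpha)+O(|z_1|^2)$. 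Its coefficient at $v^\ast$ is an explicit nonzero function of $s=\sqrt{1+t}$.
\end{itemize}

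Your first step is worth keeping, but it does not follow from (L1)--(L3) alone, and constant curvature is not the relevant input. The argument runs as follows.
\begin{itemize}
\item Radial segments of extremal discs are unit-speed Kobayashi geodesics by (L2).
\item $F_K$ is strongly convex.
\item Weak K\"ahlerness, $T_{jk}v^j=0$, is exactly what reduces the real Euler--Lagrange equation to $\ddot\sigma+\mathbb G(\sigma;\dot\sigma)=0$.
\end{itemize}
This gives $\ddot f_{z,v}(0)=-\lambda^2\mathbb G(z;v)$. Applied in the paper's setting, it yields $\mathbb G(\varepsilon w;v)=-\varepsilon\ddot\psi(0)/\lambda_0^2+O(\varepsilon^2)$ directly from the first-order variation. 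That would let you bypass the computation of $\lambda''$ and of $A_1,A_2,C$ entirely.
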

\begin{proof}
By Theorem \ref{tor}, we just need to
prove that $F_K$ does not satisfy the condition \eqref{2}, i.e., we just need to find some $(z,v)\in \widetilde \Omega$ and  $j,l$ such that
\[G_{ij}(z;v)\Gamma_{\bar l;k}^{i}(z;v)v^k\neq0.\]

\textbf{Step 1} Consider the expression of $g(v):=G(0;v)$.

For $v\neq0$ let $\lambda_0>0$ be a solution of
\[\lambda_0^2r+t\lambda_0^4r_1^2=1, \, r:=\| v\|^2, \, r_1:=\vert v_1\vert^2,\]
 and set $u=\lambda_0v,$
thus $\|u\|^2+t\vert u_1\vert^4=1$. The linear map
\[\varphi_{0}(\zeta):=\zeta u,\quad \zeta\in \mathbb D,\]
satisfies the stationary conditions of (L4):
it is a linear embedding;
 \[\rho(\zeta u)=\|u\|^2+t\vert u_1\vert^4-1=0,\quad \zeta\in \partial \mathbb D;\]
and take $p\vert_{\partial \mathbb D}\equiv1$,
notice $\varphi(\zeta)=\zeta u$, \eqref{L4-H} reduces to
\begin{equation}
H_j(\zeta)=\zeta\rho_{;j}(\zeta u)=\zeta\bar\zeta\,\bar h_j=\bar h_j,\, h:=(\mu u_1,u_2),\, \mu:=1+2t\vert u_1\vert^2,\label{hj}
\end{equation}
thus $H_j$ is constant and can be extended holomorphically to $\bar{\mathbb D}$.
By (L4), $\varphi_{0}$ is extremal, and $\varphi_{0}=f_{0,v}$ by the uniqueness in (L1), hence $F_K(0;v)=1/\lambda_0$ by (L2), i.e.
\begin{equation}
g(v):=G(0;v)=\lambda_0^{-2}=\frac{r+\sqrt{r^2+4tr_1^2}}2.\label{g(v)}
\end{equation}

\textbf{Step 2.} First-order variation of the extremal family.

Fix $v\neq0$, $w\in\mathbb C^2$. By (L3), $\varphi_\varepsilon:=f_{\varepsilon w,v}$ with small positive $\varepsilon$ near $0$, and $\lambda(\varepsilon):=F_K^{-1}(\varepsilon w;v)$ varies smoothly with respect to $\varepsilon$, thus by Taylor expansion with respect to $\varepsilon$,
\begin{equation}
\varphi_\varepsilon(\zeta)
=\zeta u+\varepsilon\psi(\zeta)+\tfrac{\varepsilon^2}{2}\chi(\zeta)
+O(\varepsilon^3),\quad
 \lambda(\varepsilon)=\lambda_0+\varepsilon\lambda'
 +\tfrac{\varepsilon^2}{2}\lambda''
+O(\varepsilon^3),\label{exp}
\end{equation}
with $\psi,\chi\in\mathcal O(\mathbb D)\cap C^\infty(\bar{\mathbb D})$, $\lambda',\lambda''\in\mathbb R$. The normalization in (L1) gives
\begin{equation}
u=\lambda_0v,\quad \psi(0)=w,\quad \dot\psi(0)=\lambda'\,v,\quad \chi(0)=0,\quad \dot\chi(0)=\lambda''\,v. \label{N}
\end{equation}
The coefficients are constrained by two identities in $\varepsilon$:

(B) $\rho(\varphi_\varepsilon(\zeta))\equiv0,\,\zeta\in \partial \mathbb D,$ since $\varphi_\varepsilon (\partial \mathbb D)\subset \partial\Omega$ by (L3);

(D) $H_j^\varepsilon(\zeta)=\zeta p_\varepsilon(\zeta)\rho_{;j}(\varphi_\varepsilon(\zeta))$ holomorphic in $\zeta$, with
\[p_\varepsilon(\zeta)
=1+\varepsilon q(\zeta)+O(\varepsilon^2),\quad  \zeta\in\partial\mathbb D,\]
 as real function, and $\sum_{j=1}^2\dot\varphi_{\varepsilon j}H^\varepsilon_j \equiv\mathrm{const}$ (from (L4)).

By (B), and \eqref{L4-H}-\eqref{exp},
  \begin{equation}
0=\frac{d}{d\varepsilon}\big\vert_0
[\rho(\varphi_\varepsilon(\zeta))]
=2\operatorname{Re}\bigg(\bar\zeta\sum_{j=1}^2\bar h_j\psi_j\bigg),\quad \zeta\in\partial \mathbb D, \label{B1}
\end{equation}
the infinitesimal tangency of $\psi$ to $\partial \Omega$ along $\varphi_0(\partial\mathbb D)$.
Writing
\[\psi_j=\sum_{k\ge0}\phi_{jk}\zeta^k,\,
\phi_{jk}:=\frac{1}{k!}\frac{\partial^k \psi_j}{\partial \zeta^k}(0),\,
b_k:=\bar h\cdot (\phi_{1k},\phi_{2k})\]
then
\[\bar\zeta\bar h\cdot\psi
=\bar\zeta\sum_{k\ge0}b_k\zeta^k
=b_0\bar\zeta+\sum_{k\ge1}b_k\zeta^{k-1},\quad \zeta\in\partial \mathbb D.\]
Thus \eqref{B1} implies
\begin{equation}
\operatorname{Re}b_1=0,\quad b_2=-\bar b_0,\quad b_k=0,\, k\ge3.\label{dl1}
\end{equation}
By \eqref{N} and \eqref{hj},
\begin{equation}
b_1:=\bar h\cdot\dot\psi(0)=\lambda'\bar h\cdot v
=\lambda'(\mu \bar u_1v_1+\bar u_2v_2)=\lambda'\lambda_0\big(r+2tr_1\vert u_1\vert^2\big).\label{dl2}
\end{equation}
Notice $\lambda$ is real, by \eqref{dl1} and \eqref{dl2},
\begin{equation}
\lambda'=0,\quad\text{hence}\quad \dot\psi(0)=\lambda' v=0,\, \psi=w+\frac{1}{2}\ddot\psi(0)\zeta^2+O(\vert\zeta\vert^3).\label{psi1}
\end{equation}

Take the first-order expansion of (D) with respect to $\varepsilon$:
\[H_1^\varepsilon=\bar h_1+\varepsilon\big(q\bar h_1+\zeta\kappa\bar\psi_1+2t\bar u_1^2\bar\zeta\psi_1\big)+O(\varepsilon^2),\, H_2^\varepsilon=\bar h_2+\varepsilon\big(q\bar h_2+\zeta\bar\psi_2\big)+O(\varepsilon^2),\]
where
$\kappa:=1+4t\vert u_1\vert^2$. Denote \[q(\zeta)=\sum_{m\in\mathbb Z}q_m\zeta^m,\quad \zeta\in \partial\mathbb D,\]
 then $q_{-m}=\overline{q_m}$ since $q(\zeta)$ is real.
Vanishing of the $\bar\zeta^m$-modes ($m\ge2$) gives
\begin{equation}
\bar \phi_{1,m+1}=-\bar h_1q_{-m}/\kappa,\, \bar \phi_{2,m+1}=-\bar h_2q_{-m}.\label{phijm}
\end{equation}
Since $b_{m+1}=0,$
\[q_m\left(\frac{\mu^2\vert u_1\vert^2}{\kappa}+\vert u_2\vert^2\right)=0.\]
Notice $S:=\mu^2\vert u_1\vert^2+\kappa\vert u_2\vert^2>0,$
we have
\[q_m=0,\, m\ge2.\]
 Hence $\phi_{jk}=0$ for $k\ge3$ by \eqref{phijm}; the remaining modes ($\bar\zeta^1$-modes of $H_1^\varepsilon,H_2^\varepsilon$; the $\zeta^1$-modes of \eqref{B1} and of the pairing variation; \eqref{N}) form  a linear system for
 $(c_1,c_2,q_{-1})$, where $c_j=\ddot\psi_j(0)$,
    whose determinant is a positive multiple of $S$, with unique solution
\[\psi(\zeta)=w+\frac{1}{2}\ddot\psi(0)\zeta^2,
\quad \ddot\psi_1(0)=-2\frac{u_1^2(2t\vert u_2\vert^2+\mu^2)\,\bar w_1+\mu u_1u_2\,\bar w_2}{S},\]
\[\ddot\psi_2(0)=-2\frac{ u_2\big(\mu^2u_1\,\bar w_1+\kappa u_2\,\bar w_2\big)}{S}.\]

\textbf{Step 3.} Second-order variation and expansion of $G(z;v)$.

Order 2 of (B) gives, on $\vert\zeta\vert=1$,
\begin{equation}
\operatorname{Re}\bigg(\bar\zeta\sum_{j=1}^2\bar h_j \chi_j\bigg)
=R:=-2\operatorname{Re}\big[t\bar u_1^2\bar\zeta^2\psi_1^2\big]-\kappa\vert\psi_1\vert^2
-\vert\psi_2\vert^2. \label{B2}
\end{equation}
Its zero Fourier mode, together with \eqref{N} and \eqref{dl2}, provides
\begin{eqnarray*}
r_0&:=&\lambda''\sum_{j=1}^2\bar h_j v_j\\
&=&-2\operatorname{Re}\big(t\bar u_1^2w_1\ddot\psi_1(0)\big)-\kappa\big(\vert w_1\vert^2+\frac{1}{4}\vert \ddot\psi_1(0)\vert^2\big)-\big(\vert w_2\vert^2+\frac{1}{4}\vert \ddot\psi_2(0)\vert^2\big).
\end{eqnarray*}
By \eqref{dl2}, $\lambda''$ is uniquely determined.

Notice that $\Omega$ is a Reinhardt domain, $z\mapsto -z$ is automorphism, thus $G(-z;v)=G(z;v)$, so odd-order partial derivatives of $G$ with respect to $z$ vanish at the origin. Thus by $G(\varepsilon w;v)=\lambda(\varepsilon)^{-2}$ and $\lambda'=0$,
\[G(\varepsilon w;v)=g(v)+\varepsilon^2Q(w;v)+O(\varepsilon^4),\quad Q=-\frac{\lambda''}{\lambda_0^3}.\]
Substituting $c_1,c_2$ from \textbf{Step 2} into $r_0$: $Q$ is a Hermitian form in $w$,
\[Q(w;v)=\vert w_1\vert^2A_1+\vert w_2\vert^2A_2+2\operatorname{Re}\big(w_1\bar w_2\,\bar v_1v_2\,C\big),\]
with $K:=\lambda_0^4\big(r+2tr_1\vert u_1\vert^2\big)$ and explicit coefficients (functions of $(r_1,r_2)$ only, via $\lambda_0^2=G^{-1}$, $\vert u_j\vert^2=\lambda_0^2r_j$):
\begin{equation}
A_1=\frac1K\bigg[\kappa+\frac{\kappa\vert u_1\vert^4(2t\vert u_2\vert^2+\mu^2)^2}{S^2}+\frac{\mu^4\vert u_1\vert^2\vert u_2\vert^2}{S^2}-\frac{4t\vert u_1\vert^4(2t\vert u_2\vert^2+\mu^2)}{S}\bigg],\label{eqa}
\end{equation}
\begin{equation}
A_2=\frac1K\bigg[1+\frac{\kappa \mu^2\vert u_1\vert^2\vert u_2\vert^2}{S^2}+\frac{\kappa^2\vert u_2\vert^4}{S^2}\bigg],
\end{equation}
\begin{equation}
C=\frac{\lambda_0^2\mu}{K}\bigg[\frac{\kappa\vert u_1\vert^2(2t\vert u_2\vert^2+\mu^2)+\mu\kappa\vert u_2\vert^2}{S^2}-\frac{2t\vert u_1\vert^2}{S}\bigg].\label{eqc}
\end{equation}
Since this holds for every fixed $w$ with uniform remainder, the second-order Taylor expansion of $G(\cdot\, ;v)$ at $0$ is $Q$:
\begin{equation}
G(z;v)=g(v)+\vert z_1\vert^2A_1+\vert z_2\vert^2A_2+2\operatorname{Re}\big(z_1\bar z_2\,\bar v_1v_2\,C\big)+O(\vert z\vert^3). \label{E}
\end{equation}

\textbf{Step4.}
From \eqref{E}-\eqref{eqc}, we have
\[G_{;k}(z;v)v^k=\bar z_1v_1(A_1+r_2C)+\bar z_2v_2(A_2+r_1C)+O(\vert z\vert^2).\]
Notice  $G_{ij}\Gamma^i_{\bar l;k}v^k$ only concerns with first-order derivative with respect to $z$, in the following, we restricted on the point $(z_1,0)$. Denoting
\[\alpha(r_1,r_2):=A_1(r_1,r_2)+r_2C(r_1,r_2),\]
then
\[G_{;k}((z_1,0);v)v^k=\bar z_1v_1\alpha+O(\vert z_1\vert^2).\]
Since $G^{\bar a i}(z;v)=g^{\bar a i}(v)+O(\vert z\vert)$, where $g^{\bar a i}$ is the inverse of $g_{i\bar j}=\dot\partial_{i}\dot\partial_{\bar j}g$, thus
\begin{eqnarray*}
\Gamma^i_{;k}((z_1,0);v)v^k
&=&g^{\bar m i}(v)\dot\partial_{\bar m}\big(G_{;k}((z_1,0);v)v^k\big)
+O(\vert z_1\vert^2)\\
&=&g^{\bar m i}(v)\bar z_1v_1\,\dot\partial_{\bar m}\alpha
+O(\vert z_1\vert^2).
\end{eqnarray*}
Also we have $
G_{ij}(z;v)=g_{ij}(v)+O(\vert z\vert),$ thus
\begin{equation}
G_{ij}((z_1,0);v)\dot\partial_{\bar l}\Gamma^i_{;k}((z_1,0);v)v^k
=\bar z_1v_1
g_{ij}\,\dot\partial_{\bar l}\bigg(g^{\bar m i}\dot\partial_{\bar m}\alpha\bigg)+O(\vert z_1\vert^2). \label{A}
\end{equation}
Every ingredient in \eqref{A} is an explicit elementary function of $(v,t)$.

\textbf{Step 5.} Exact evaluation in special directions.
Denote  $s:=\sqrt{1+t}>1$, and take \[v^\ast=\left(\sqrt{\frac1{1+s}},\sqrt{\frac1{1+s}}\right),\,
\frac1{1+s}=\frac1{1+\sqrt{1+t}}=\frac{\sqrt{1+t}-1}t,\]
then $r_1(v^\ast)=r_2(v^\ast)=\frac1{1+s}=\frac{\sqrt{1+t}-1}t,$
\[(r+tr_1^2)(v^\ast)
=\frac{2(1+s)+t}{(1+s)^2}
=\frac{2(1+\sqrt{1+t})+t}{(1+\sqrt{1+t})^2}=1.\]
Thus $v=u\in\partial\Omega,$ and
$\lambda_0(v)=1$ is the positive solution of $\lambda^2r+t\lambda^4r_1^2=1,$ hence $g(v)=1.$
%All the following become rational functions of $s$:
%\[\mu=1+2t\vert u_1\vert^2,\, \mu(v^\ast)
%=1+\frac{2t}{1+\sqrt{1+t}}=2\sqrt{1+t}-1=2s-1,\]
%\[\kappa=1+4t\vert u_1\vert^2,\,
%\kappa(v^\ast)=1+\frac{4t}{1+\sqrt{1+t}}=4s-3,\]
%\[S=\kappa\vert u_2\vert^2+\mu^2\vert u_1\vert^2,\,
%S(v^\ast)=(\kappa+\mu^2)r_1=\frac{2(2s^2-1)}{1+s},\]
%\[K=\lambda_0^4(r+2tr_1\vert u_1\vert^2),\,
%K(v^\ast)=r+2tr_1^2=2r_1(1+tr_1)=\frac{2s}{1+s}.\]
%\[v_1v_mg_{i1}\left(\dot\partial_{\bar 1}g^{\bar m i}\alpha_{r_m}+g^{\bar m i}v_1\alpha_{r_mr_1}\right).\]
Notice $\alpha$ depends only on $r_i$, we need to calculate
\[\left[v_1 g_{i1}\dot\partial_{\bar 1}\left(g^{\bar m i}\dot\partial_{\bar m}\alpha\right)\right]_{\vert v=v^\ast}
=\left[v_1 g_{i1}\left(\dot\partial_{\bar 1}g^{\bar m i}v_m\frac{\partial\alpha}{\partial r_m}
+g^{\bar m i}v_mv_1\frac{\partial^2\alpha}{\partial r_m\partial r_1}\right)\right]_{\vert v=v^\ast}.\]
By \eqref{g(v)},
\(g(v):=G(0;v)=\lambda_0^{-2}=\frac{r+\sqrt{r^2+4tr_1^2}}2\),
by direct calculation,
\[g_{11}
=\frac{\bar v_1^2}{2\sqrt{r^2+4tr_1^2}}\left\{
1+4t-\frac{\left[(1+4t)r_1+r_2\right]^2}{r^2+4tr_1^2}\right\}
=\frac{2tr_2^2\bar v_1^2}{(r^2+4tr_1^2)^{\frac32}},\]
\[g_{21}
=\frac{\bar v_1\bar v_2}{2\sqrt{r^2+4tr_1^2}}\left\{
1-\frac{\left[(1+4t)r_1+r_2\right]r}{r^2+4tr_1^2}\right\}
=-\frac{2tr_1r_2\bar v_1\bar v_2}{(r^2+4tr_1^2)^{\frac32}},\]
\[g_{11}(v^\ast)=-g_{12}(v^\ast)=\frac{t}{4(1+t)^{3/2}}
=\frac{s^2-1}{4s^3}.\]
similarly,
\begin{eqnarray}
(g_{i\bar j})
&=&\begin{pmatrix}\frac12+\frac{r+4tr_1}
{2(r^2+4tr_1^2)^{\frac12}}
+\frac{2tr_1r_2^2}{(r^2+4tr_1^2)^{\frac32}} & -\frac{2tr_1r_2\bar v_1v_2}{(r^2+4tr_1^2)^{\frac32}}\\
-\frac{2tr_1r_2v_1\bar v_2 }{(r^2+4tr_1^2)^{\frac32}}& \frac12+\frac{r}{2(r^2+4tr_1^2)^{\frac12}}
+\frac{2tr_1^2r_2}{(r^2+4tr_1^2)^{\frac32}}\end{pmatrix},\nonumber\\
(g_{i\bar j})(v^\ast)
&=&\frac{s+1}{4s^3}
\begin{pmatrix}
4s^3-2s^2+s-1 & -s+1\\[1mm] -s+1& (s+1)(2s-1)\end{pmatrix}.
\end{eqnarray}
Notice
\(\det\left(g_{i\bar j}(v^\ast)\right)=(g_{1\bar1}g_{2\bar2}-\vert g_{1\bar2}\vert^2)(v^\ast)=\frac{(s+1)^2(2s^2-1)}{4s^3},\)
thus
\[\left(g^{\bar m i}(v^\ast)\right)
=\frac1{(s+1)(2s^2-1)}
\begin{pmatrix}
(s+1)(2s-1)&s-1\\s-1&4s^3-2s^2+s-1
\end{pmatrix}.
\]

\[\begin{aligned}
g_{1\bar1\bar1}&=\frac{2tr_2^2v_1}{(r^2+4tr_1^2)^{\frac52}}
\left(2(r^2+4tr_1^2)-3r_1(r+4tr_1)\right),\\
g_{1\bar2\bar1}&=\frac{2tr_1r_2v_2}{(r^2+4tr_1^2)^{\frac52}}
\left(3r_1(r+4tr_1)-2(r^2+4tr_1^2)\right),\\
g_{2\bar1\bar1}&=\frac{2tr_2v_1^2\bar v_2}{(r^2+4tr_1^2)^{\frac52}}
\left(3r_1(r+4tr_1)-(r^2+4tr_1^2)\right),\\
g_{2\bar2\bar1}&=\frac{2tr_1r_2v_1}{(r^2+4tr_1^2)^{\frac52}}
\left(r^2+4tr_1^2-3r_1(r+4tr_1)\right),
\end{aligned}\]
and
\[(g_{i\bar m\bar1})(v^\ast)
=\frac{(s-1)(s+1)^{3/2}}{8s^5}
\begin{pmatrix}3-2s^2 & 2s^2-3 \\ 4s^2-3 & 3-4s^2\end{pmatrix}.\]
In the expression of $\alpha=A_1+r_2C$ with \eqref{eqa} and \eqref{eqc}, recall
\[\mu=1+2t\vert u_1\vert^2, \,
\kappa=1+4t\vert u_1\vert^2,\,
S=\kappa\vert u_2\vert^2+\mu^2\vert u_1\vert^2,\,
K=\lambda_0^4(r+2tr_1\vert u_1\vert^2),\]
 we should use $\vert u_i\vert^2=\lambda_0^2r_i=\frac{r_i}{g}
=\frac{2r_i}{r+\sqrt{r^2+4tr_1^2}}.$
By direct calculation,
\begin{eqnarray*}
\frac{\partial A_1}{\partial r_1}(v^\ast)
&=&\frac{(s+1)^2
(64s^7-64s^6-32s^5+44s^4-8s^3-3s^2+6s-3)}{8s^3D^2},\\
\frac{\partial A_1}{\partial r_2}(v^\ast)
&=&\frac{(s+1)^2(16s^6-16s^5-8s^4+16s^3-3s^2-6s+3)}{8s^3(2s^2-1)^2},\\
\frac{\partial^2A_1}{\partial r_1^2}(v^\ast)
&=&-\frac{(s-1)^2(s+1)^3(48s^6-84s^4+50s^2-9)}{16s^5(2s^2-1)^3},\\
\frac{\partial^2A_1}{\partial r_2\partial r_1}(v^\ast)
&=&\frac{(s-1)^2(s+1)^3(48s^6-84s^4+50s^2-9)}{16s^5(2s^2-1)^3};\\
\frac{\partial C}{\partial r_1}(v^\ast)
&=&\frac{(s-1)^2(s+1)^3(2s-1)(2s+1)}{8s^3(2s^2-1)^2},\\
\frac{\partial C}{\partial r_2}(v^\ast)
&=&-\frac{(s-1)^2(s+1)^3(2s-1)(2s+1)}{8s^3(2s^2-1)^2},\\
\frac{\partial^2C}{\partial r_1^2}(v^\ast)
&=&-\frac{(s-1)^2(s+1)^4(4s^2-3)(12s^4-6s^2+1)}{16s^5(2s^2-1)^3},\\
\frac{\partial^2C}{\partial r_2\partial r_1}(v^\ast)
&=&\frac{(s-1)^2(s+1)^4(32s^6-48s^4+20s^2-3)}{16s^5(2s^2-1)^3}.
\end{eqnarray*}
Substituting them into the formulas for $\alpha$,
\begin{eqnarray*}
\frac{\partial\alpha}{\partial r_1}(v^\ast)
&=&\frac{(s+1)^2(4s^3-4s^2+2s-1)}{2s^3},\,
\frac{\partial\alpha}{\partial r_2}(v^\ast)
=\frac{(s+1)^2(2s^2-2s+1)}{2s^3},\\
\frac{\partial^2\alpha}{\partial r_1^2}(v^\ast)&=&-\frac{3(s-1)^2(s+1)^3}{4s^5},\,
\frac{\partial^2\alpha}{\partial r_1r_2}(v^\ast)
=\frac{3(s-1)^2(s+1)^3}{4s^5}.
\end{eqnarray*}
At last, substitute the above quantities into \eqref{A}, and notice $s>1$, yields
\begin{eqnarray*}
&&\left[G_{i1}((z_1,0);v)\dot\partial_{\bar 1}\Gamma^i_{;k}((z_1,0);v)v^{k}\right]_{\vert v=v^\ast}\\
&=&-\frac{(s-1)^2(s+1)^{\frac32}(4s^2-3)}{4s^3(2s^2-1)^2}\bar z_1
+O(\vert z_1\vert^2).
\end{eqnarray*}
Since $\frac{(s-1)^2(s+1)^{\frac32}(4s^2-3)}{4s^3(2s^2-1)^2}>0$ is independent of $z_1$, thus
\[\left[G_{i1}((z_1,0);v)\dot\partial_{\bar 1}\Gamma^i_{;k}((z_1,0);v)v^{k}\right]_{\vert v=v^\ast}\neq0\]
 for sufficiently small $\vert z_1\vert>0$.
\end{proof}
\begin{remark}
This is the first smooth weakly K\"ahler-Finsler example which is not K\"ahler-Finsler. And it also implies that even when a weakly K\"ahler-Finsler has constant holomorphic sectioal curvature, it may still fail to be K\"ahler-Finsler. By the same methods, one can check that the Kobayashi metrics of the strongly convex domains $\left\{z\in\mathbb C^2:\|z\|^2+t\left(\vert z_1\vert^4+\vert z_2\vert^4\right)\right\},t>0,$ are not K\"ahler--Finsler.
\end{remark}
By Theorem \ref{Kob-nKF}, Question \ref{Kob-ques} can be reduced to the following problem.
\begin{question}\label{KF-nKB}
Is there any smooth bounded strongly convex domain which does not holomorphically equivalent to the unit ball, such that the Kobayashi metric is K\"ahler--Finsler?
\end{question}
\begin{remark}
The positive answer to Question \ref{KF-nKB} will provide the first  non-K\"ahler--Berwald, K\"ahler--Finsler example.
\end{remark}

\vskip0.4cm
\noindent
\textbf{Acknowledgement.}\ {\small The author would like to thank his Postdoctoral Supervisor, Professor Xiangyu Zhou, for his encouragement of this research.

\vskip0.4cm
\noindent
\textbf{AI Use Disclosure.}\ {\small
The research direction and initial mathematical framework were developed by the author. Theorem \ref{1thm} for complex dimension $n>2,$ was obtained using the Danus System, an automated mathematical reasoning system built on Rethlas, with base model gpt-5.6-sol.
The examples and Theorem \ref{tor} used in the proof of Theorem \ref{1Kob-nKF} are due to the author, and the idea of employing Lempert's results in this proof was also proposed by the author.  The verification of these examples, which do not satisfy the conditions in Theorem \ref{tor}, was performed with assistance from the Kimi K3 large language model.

The author would also like to thank the members of the Rethlas and Danus team (namely Leheng Chen, Guoxiong Gao, Jiedong Jiang, Haocheng Ju, Jihao Liu, Shurui Liu, Zeming Sun, Yuefeng Wang, Bin Wu, Liang Xiao, and Bin Dong) for their contributions to the development of these systems.

The author is fully responsible for the correctness of the paper.

%\bibliography{sn-bibliography}% common bib file
%% if required, the content of .bbl file can be included here once bbl is generated
%%\input sn-article.bbl
%\input{New-WKF101.bbl}
%% Default %%

%

\end{document}